\DeclareMathOperator{\rk}{\mathrm{rk}}
\def\tr{\mathrm{tr}}
\def\onto{\twoheadrightarrow}
\def\diag{\mathrm{diag}}
\def\om{\omega}
\def\kk{\mathbf{k}}
\def\C{\ensuremath{\mathbbm{C}}}
\def\Q{\mathbbm{Q}}
\def\Qb{\overline{\mathbbm{Q}}}
\def\Z{\mathbbm{Z}}
\def\sl{\mathfrak{sl}}
\newcommand{\la}{\lambda}
\def\ii{\mathrm{i}}
\def\dd{\mathrm{d}}
\def\Ker{\mathrm{Ker}}
\def\End{\mathrm{End}}
\def\Res{\mathrm{Res}}
\def\Ind{\mathrm{Ind}}
\def\Irr{\mathrm{Irr}}
\def\GL{\mathrm{GL}}
\newcommand\bkappa{{\boldsymbol{\kappa}}}
\def\v@rt#1#2{\m@th\ooalign{$\hfil#1|\hfil$\crcr$#1#2$}}
\def\captr{\mathrel{\mathpalette\v@rt\cap}}
\newtheorem{theorem}{Theorem}[section]
\newtheorem{lemma}[theorem]{Lemma}
\newtheorem{question}[theorem]{Question}
\newtheorem{remark}[theorem]{Remark}
\newtheorem{corollary}[theorem]{Corollary}
\newtheorem{proposition}[theorem]{Proposition}
\numberwithin{equation}{section}
\date{July 28, 2019}
\author{Ivan Marin}
\title[]{Truncations and extensions of the Brauer-Chen algebra} 
\dedicatory{To the memory of Kay Magaard}
\begin{document}

\maketitle

{\bf Abstract.} The Brauer-Chen algebra is a generalization of the algebra of Brauer diagrams to arbitrary complex reflection groups, that admits a natural monodromic deformation. We determine the generic representation theory of the first non trivial quotient of this algebra. We also define natural extensions of this algebra and prove that they similarly admit natural monodromic deformations.

\tableofcontents

\section{Introduction}

\subsection{Context}
The algebra of Brauer diagrams was introduced by Brauer in \cite{BRAUER} in order to understand
the commutant of the orthogonal (or symplectic) groups of the $n$-fold tensor powers of a quadratic space $V$, in the same way as the (group algebra of the) symmetric group $\mathfrak{S}_n$ captures the commutant of the general linear group on the $n$-fold tensor powers of a vector space $V$.
Its structure has been determined by Wenzl in \cite{WENZL}. Combinatorially, it can be described as an extension of the group algebra of $\mathfrak{S}_n$.

It has been shown that natural generalizations of the Brauer algebra exists for other finite reflection groups. In \cite{CFW}, Cohen Frenk and Wales attached one to any Coxeter group of type ADE, the case of type $A_n$ corresponding to the
original case. A complete structural description was obtained in this case.

Later, Chen proposed in \cite{CHENBR} a much larger generalization to \emph{arbitrary} (complex) reflection groups. This generalization is isomorphic to the Cohen-Frenk-Wales algebra for real reflection groups of type ADE.

By contrast with the ADE case, the structural description of this algebra in general is still open, and even the determination of its dimension is still an open problem in general. In this paper we provide a partial description, and more precisely we decompose its first non-trivial quotient ('truncation').

Another important aspect of the Brauer-Chen algebra is that it admits
a natural deformation, obtained by the monodromy of some 1-form, in
the same vein as the deformation of the group algebra known as the Hecke algebra.
It has been proved recently that these Hecke algebras admit natural extensions. We then prove that the deformation of the Brauer-Chen algebra also admits such
a natural extension, also provided by a monodromy construction.

Finally, we will explain how these two aspects should play a role in the construction and understanding of a generalized Krammer representation for
the corresponding braid groups.

\subsection{Truncations}
We introduce a natural series of two-sided ideals $I_1 \supset I_2 \supset \dots$ and call representation of level $r$ of the Brauer-Chen  algebra $Br(W)$ any representation factorizing through $Br_r(W)=Br(W)/I_{r+1}$ but not through $Br_{r-1}(W)$. This series of ideals satisfies that $Br_0(W)$ coincides with the group algebra of $W$, so $Br_1(W)$ is the first non-trivial quotient, and the first new representations are the `level 1 representations' arising there.

Our first main result is then the following one.
\begin{theorem} \label{thm:structBr1}
Let $\bkappa$ be a field of characteristic $0$, $\kk = \bkappa(\delta)$ the function field in one indeterminate. Let $W$ be (pseudo-)reflection group, with set of reflection hyperplanes $\mathcal{A}$, and $Br(W)$ the generic Brauer-Chen algebra defined over $\kk$,
as in section \ref{sect:presentations}. Let us pick a representative $H_0$ for each $\mathcal{A}_0 \in \mathcal{A}/W$. Then $Br_1(W)$ is a semisimple algebra, whose irreducible representations not factoring through $\Qb W$ are in 1-1 correspondence with the pairs
$(\mathcal{A}_0,\theta)$ where $\mathcal{A}_0\in \mathcal{A}/W$ and $\theta$
is an irreducible representation of $N(W_{H_0})/W_{H_0}$ over $\kk$.
The restriction of such a representation to $W \subset Br_1(W)$ is the induced representation $\Ind_{N(W_{H_0})}^W \theta$.

The dimension of $Br_1(W)$ is equal to 
$$
|W| + \sum_{\mathcal{A}_0 \in \mathcal{A}/W} |\mathcal{A}_0| \times |W|/|W_{H_0}|
$$ 

\end{theorem}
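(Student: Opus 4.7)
The strategy is to split $Br_1(W)$ as the sum of (a copy of) $\kk W$ and the ideal $I_1/I_2$, decompose the latter as a sum of two-sided ideals indexed by the $W$-orbits on $\mathcal{A}$, and identify each component as a matrix algebra over the group algebra $\kk[N(W_{H_0})/W_{H_0}]$; semisimplicity and the description of the irreducibles then follow from standard Morita-theoretic principles.

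Since $Br_0(W) = \kk W$ by construction, the natural map $\kk W \to Br_1(W)$ is injective, and we have a short exact sequence of $(\kk W, \kk W)$-bimodules
\[
0 \longrightarrow I_1/I_2 \longrightarrow Br_1(W) \longrightarrow \kk W \longrightarrow 0.
\]
The ideal $I_1$ is generated by the elements $e_H$ ($H \in \mathcal{A}$), so $I_1/I_2$ is spanned by the products $w e_H w'$ with $H \in \mathcal{A}$ and $w, w' \in W$. Using the conjugation relation $w e_H w^{-1} = e_{wH}$, one obtains a direct sum decomposition into two-sided ideals of $Br_1(W)$,
\[
I_1/I_2 = \bigoplus_{\mathcal{A}_0 \in \mathcal{A}/W} M_{\mathcal{A}_0},
\]
where $M_{\mathcal{A}_0}$ denotes the span of the $w e_H w'$ with $H \in \mathcal{A}_0$.

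The central step is to identify each $M_{\mathcal{A}_0}$ as a matrix algebra. Using $e_{H_0}^2 = \delta e_{H_0}$, the element $\tilde e_{H_0} = \delta^{-1} e_{H_0}$ is an idempotent in $Br_1(W)$. The crucial computation is the value of $\tilde e_{H_0} w \tilde e_{H_0}$ for $w \in W$: when $w \notin N(W_{H_0})$ the product lies in $I_2$ and hence vanishes in $Br_1(W)$, while when $w \in N(W_{H_0})$ it reduces to a multiple of $\tilde e_{H_0}$ depending only on the image of $w$ in $N(W_{H_0})/W_{H_0}$. Together with the absorption relations for $W_{H_0}$, this yields an algebra isomorphism $\tilde e_{H_0} Br_1(W) \tilde e_{H_0} \cong \kk[N(W_{H_0})/W_{H_0}]$ and identifies the bimodule $P := Br_1(W) \tilde e_{H_0}$ with $\kk W \otimes_{\kk N(W_{H_0})} \kk[N(W_{H_0})/W_{H_0}]$; this is a $(\kk W, \kk[N(W_{H_0})/W_{H_0}])$-bimodule of dimension $|W/W_{H_0}|$ and of rank $|\mathcal{A}_0|$ over $\kk[N(W_{H_0})/W_{H_0}]$. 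Since $M_{\mathcal{A}_0} = Br_1(W) \tilde e_{H_0} Br_1(W)$, Morita theory then supplies an algebra isomorphism
\[
M_{\mathcal{A}_0} \cong \Mat_{|\mathcal{A}_0|}(\kk[N(W_{H_0})/W_{H_0}]).
\]

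The remaining claims follow by general principles. The identity $f_0$ of $M_{\mathcal{A}_0}$ is central in $Br_1(W)$ (as the identity of a two-sided ideal), so $Br_1(W) \cong \kk W \times \bigoplus_{\mathcal{A}_0} \Mat_{|\mathcal{A}_0|}(\kk[N(W_{H_0})/W_{H_0}])$ as a product of semisimple algebras, giving semisimplicity. Simple $M_{\mathcal{A}_0}$-modules are in bijection with $\theta \in \Irr(N(W_{H_0})/W_{H_0})$, and under the Morita equivalence the one labelled by $\theta$ is $P \otimes_{\kk[N(W_{H_0})/W_{H_0}]} V_\theta \cong \Ind_{N(W_{H_0})}^W \theta$ as a $\kk W$-module. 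The identity $\dim M_{\mathcal{A}_0} = |\mathcal{A}_0|^2 \cdot |N(W_{H_0})/W_{H_0}| = |\mathcal{A}_0| \cdot |W|/|W_{H_0}|$ then yields the dimension count. The main obstacle is the technical control of $\tilde e_{H_0} w \tilde e_{H_0}$ modulo $I_2$: the rest of the argument is a routine Morita template once the multiplication table in $I_1/I_2$ is established, but extracting this table from the presentation of $Br(W)$ and from the explicit definition of $I_2$ requires genuine combinatorial work with the defining relations.
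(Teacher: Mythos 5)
Your architecture (Peirce decomposition at the idempotent $\tilde e_{H_0}=\delta^{-1}e_{H_0}$, Morita equivalence of the corresponding ideal with $\kk[N_0/W_{H_0}]$ where $N_0=N(W_{H_0})$, induced modules $\kk W\otimes_{\kk N_0}V_\theta$, and the final dimension bookkeeping) is a legitimate repackaging of the structure the theorem asserts, but the "crucial computation" on which it rests is wrong as stated, and the plan is missing the half of the argument that actually carries the weight. Concerning the computation: for $w\notin N_0$ one has $e_{H_0}we_{H_0}=w\,e_{w^{-1}(H_0)}e_{H_0}$, and when $w^{-1}(H_0)$ and $H_0$ are distinct but \emph{not} transverse, relation $(3)$ (relation $(3_1)$ in $Br_1(W)$) rewrites this as $\sum_{s(H_0)=w^{-1}(H_0)}\mu_s\,(ws)\,e_{H_0}$ with each $ws\in N_0$. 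This lies in $\kk N_0\,e_{H_0}$, not in $I_2$, and it is nonzero in general: already in type $A_2$, with $H_0=\Ker(s_1-1)$ and $w=s_2$, one gets $e_{H_0}s_2e_{H_0}=\mu\,e_{H_0}$. So the correct conclusion of this step is only that $\tilde e_{H_0}Br_1(W)\tilde e_{H_0}$ is \emph{spanned} by the $n\tilde e_{H_0}$, $n\in N_0/W_{H_0}$, i.e.\ that there is a surjection $\kk[N_0/W_{H_0}]\onto\tilde e_{H_0}Br_1(W)\tilde e_{H_0}$.

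The structural gap is that nothing in your plan yields the \emph{injectivity} of that surjection, equivalently the linear independence of the spanning set $\{we_{\underline H}\}$ of Proposition \ref{prop:spannBr}, equivalently the lower bound $\dim Br_1(W)\geq |W|+\sum_{\mathcal{A}_0}|\mathcal{A}_0|\,|W|/|W_{H_0}|$. Manipulating the defining relations can only show that elements span and that products reduce; a "multiplication table" computed on a spanning set does not define an algebra unless the set is already known to be a basis, and without that information $\tilde e_{H_0}Br_1(W)\tilde e_{H_0}$ could a priori be a proper quotient of $\kk[N_0/W_{H_0}]$, and $M_{\mathcal{A}_0}$ a proper (possibly non-semisimple) quotient of the matrix algebra. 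This is precisely where the paper spends its effort: it explicitly constructs, for every $\bkappa N_0$-module $M$ on which $W_{H_0}$ acts trivially, operators $p_H$ on $\tilde M=\kk W\otimes_{\kk N_0}M_\kk$ and verifies all defining relations of $Br_1(W)$, proves the resulting modules are irreducible and pairwise non-isomorphic, and shows every irreducible not factoring through $\kk W$ arises this way; the lower bound $\sum_{\mathcal{A}_0}|\mathcal{A}_0|^2|N_0|/|W_{H_0}|$ obtained from these representations then meets the spanning-set upper bound, which is what simultaneously delivers semisimplicity, the dimension formula, and the classification. To close your argument you must supply this representation-theoretic construction (or a normal-form/diamond-lemma argument for the presentation); the Morita formalism by itself does not provide it.
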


The above result provides a complete description of $Br(W)$ exacly when $Br(W) = Br_1(W)$, that is when there is no pair of transverse reflecting hyperplanes. This is the case for all irreducible rank 2 groups, but also for a few groups in rank 3. Specifically, this is the case for the groups $G(e,e,3)$ with $e \geq 3$ (see \cite{NEAIMETH}, Proposition 5.3.2) as well as for the primitive reflection groups $G_{24}$ and $G_{27}$
(but it is \emph{not} the case for the Coxeter group $H_3$). Therefore, we get the following. 
\begin{corollary} (See section \ref{subsect:Gee3}) When $W = G(e,e,3)$ and $e$ is odd, $Br(W)$ is semisimple and has dimension $3e^2(2+3e)$.
\end{corollary}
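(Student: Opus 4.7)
The plan is to invoke Theorem~\ref{thm:structBr1} directly, once the identity $Br(W) = Br_1(W)$ is established for $W = G(e,e,3)$ with $e$ odd, and then to evaluate the dimension formula by a short combinatorial computation.

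First I verify the hypothesis $Br(W) = Br_1(W)$, which as noted just before the corollary amounts to showing that $G(e,e,3)$ has no pair of transverse reflecting hyperplanes. The reflections of $G(e,e,3)$ are the $3e$ twisted transpositions $\sigma_{ij}^{(k)}$, for $1 \le i < j \le 3$ and $k \in \Z/e\Z$, sending $e_i \mapsto \zeta^k e_j$ and $e_j \mapsto \zeta^{-k} e_i$; each has order $2$, and any two of them share at least one index. A direct computation shows that $\sigma_{ij}^{(k)}$ and $\sigma_{ij}^{(\ell)}$ commute if and only if $2(k-\ell) \equiv 0 \pmod{e}$, which for $e$ odd forces $k = \ell$. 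Since distinct hyperplanes therefore never carry commuting reflections, no transverse pair exists, recovering Proposition~5.3.2 of \cite{NEAIMETH}. Theorem~\ref{thm:structBr1} then immediately yields semisimplicity.

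Second I assemble the data entering the dimension formula: $|W| = 6e^2$; the number of $W$-orbits of hyperplanes; the size of any such orbit; and $|W_{H_0}|$. Conjugating $\sigma_{12}^{(k)}$ by $\diag(\zeta^a,\zeta^b,\zeta^{-a-b}) \in W$ produces $\sigma_{12}^{(k+b-a)}$, and since $b-a$ is unconstrained (the third coordinate absorbs $a+b$), the parameter $k$ may be shifted arbitrarily. Combined with the transitive action of $\mathfrak{S}_3 \subset W$ on index pairs, this shows that all $3e$ hyperplanes form a single $W$-orbit, so $|\mathcal{A}/W| = 1$ and $|\mathcal{A}_0| = 3e$. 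Each hyperplane carries a unique non-trivial reflection (of order $2$), whence $|W_{H_0}| = 2$. Plugging these into Theorem~\ref{thm:structBr1} gives
$$ |W| + |\mathcal{A}_0| \cdot \frac{|W|}{|W_{H_0}|} = 6e^2 + 3e \cdot 3e^2 = 3e^2(2 + 3e). $$

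The only substantive step is the commutativity calculation ruling out transverse pairs when $e$ is odd; the argument is sharp in the sense that for $e$ even, the reflections $\sigma_{ij}^{(k)}$ and $\sigma_{ij}^{(k+e/2)}$ commute, so $Br(W)$ then strictly contains $Br_1(W)$ and the dimension above understates it. Everything else is bookkeeping from standard facts about $G(e,e,n)$.
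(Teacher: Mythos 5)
Your overall route is the same as the paper's: establish $Br(W)=Br_1(W)$, then feed the combinatorial data into Theorem~\ref{thm:structBr1}. The bookkeeping half is correct and matches the paper's computation in section~\ref{subsect:Gee3} exactly (single $W$-orbit of $3e$ hyperplanes, $|W_{H_0}|=2$, $|W|=6e^2$, giving $6e^2+9e^3$). The paper simply cites \cite{NEAIMETH} for the absence of transverse pairs, whereas you try to prove it directly, and it is that sub-argument which has a genuine problem.

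The problem is that you treat ``$H_1\captr H_2$'' as if it were equivalent to ``$s_{H_1}$ and $s_{H_2}$ commute''. By the paper's definition, transversality means $H_1$ and $H_2$ are the \emph{only} hyperplanes of $\mathcal{A}$ containing $H_1\cap H_2$. The implication your main argument needs --- transverse $\Rightarrow$ commuting, because the parabolic subgroup fixing $H_1\cap H_2$ then has exactly two reflections and is $\Z/2\times\Z/2$ --- is true but never stated or justified. The converse, on which your closing ``sharpness'' paragraph rests, is false: for $e$ even and $e\geq 4$, $\sigma_{12}^{(k)}$ and $\sigma_{12}^{(k+e/2)}$ do commute, yet their hyperplanes $\{x_2=\zeta^k x_1\}$ and $\{x_2=\zeta^{k+e/2}x_1\}$ are \emph{not} transverse, since all $e$ hyperplanes $\{x_2=\zeta^m x_1\}$ contain the line $x_1=x_2=0$. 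So your assertion that for even $e$ the algebra $Br(W)$ strictly contains $Br_1(W)$ and the dimension formula understates the truth is wrong; indeed the paper's introduction records (citing \cite{NEAIMETH}) that there are no transverse pairs for \emph{all} $e\geq 3$. A second, smaller gap: your commutativity check only covers pairs $\sigma_{ij}^{(k)},\sigma_{ij}^{(\ell)}$ supported on the same transposition; to conclude that ``distinct hyperplanes never carry commuting reflections'' you must also dispose of pairs such as $\sigma_{12}^{(k)},\sigma_{13}^{(\ell)}$ (easy --- their permutation parts generate $\mathfrak{S}_3$ --- but it has to be said). With the correct implication made explicit and that case added, your proof of the corollary as stated (for $e$ odd) goes through; the sharpness claim should be deleted.
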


In particular, this provides additional evidence towards conjecture 5.4.2 in \cite{NEAIMETH} about the structure of a generalized BMW algebra in type $G(e,e,3)$.

\medskip
 
A major obstacle to extend the above result to the whole algebra $Br(W)$ is that a good understanding in the general case is still missing of what was called in \cite{CFW}, for type ADE, \emph{admissible collections} of transverse hyperplanes, and described there in terms of the associated root system.

\subsection{Extensions and deformations}

Our second main result (see theorem \ref{theo:flat}) is the following one. We proved in \cite{YH,YH2} that Hecke algebras admit natural extensions by the M\"obius algebra $\C \mathcal{L}$ of the lattice $\mathcal{L}$ of the reflection subgroups of $W$, and that these algebras are monodromic deformation of $\C W \ltimes \C \mathcal{L}$ in the same way as the Hecke algebra is a monodromic deformation of $\C W$. Here we prove
that the same phenomenon occurs for the Brauer-Chen algebra. In particular, there is a natural KZ-type connection on $Br(W) \ltimes \C \mathcal{L}$ that `covers' in some sense these two different constructions, and which should be related, when $W = \mathfrak{S}_n$, with the tied-BWM algebra introduced by Aicardi and Juyumaya in \cite{AICJUYU}. In the framework of links invariants, this tied-BMW algebra
supports the Markov trace responsible for the Kauffman and HOMFLY
as well as their `tied' variants.

Our work then suggests that there should exist a general definition of a generalized (tied-)BMW algebra for arbitrary complex reflection groups, that should be a module of finite type over its natural ring of definition. An idea of how much
this module structure could be complicated for specific parameters is given e.g. by the extensive work of Yu on a BMW algebra for the family $G(d,1,n)$, see \cite{YU}.

\subsection{Brauer algebras and Krammer representations}

The very first prototype of the representations of $Br_1(W)$ described here were implicitely described in \cite{KRAMCRG}, as they served as a guide to Chen to construct his algebra. These representations can be deformed using monodromy means to the generalized Krammer representations of the braid group of $W$ in type ADE as defined in \cite{CW,DIGNE}, see \cite{KRAMMINF}. The explicit (algebraic, combinatorial) construction of these generalized Krammer representation for arbitrary complex reflection groups is quite an ongoing challenge. To my knowledge, the first successful attempt in this direction
in the non-real case has been made by Neaime in \cite{NEAIMETH}, where such a matrix representation was constructed
for $W = G(3,3,3)$, using a deformation of the Brauer-Chen algebra. It appears in this case that this representation admits non-trivial Galois conjugates. These conjugate representations are deformations of other level 1 representations of the Brauer-Chen representation, and this provides another motivation for the present work, namely to describe infinitesimally the Galois conjugates of the generalized Krammer representation that we introduced in \cite{KRAMCRG}. Note that Neaime also constructed a conjectural matrix model for $G(4,4,3)$.

In the same way that understanding the general structure of the BMW algebra is useful to deal with the Krammer representation, the results of the present paper should thus provide similar services in the ongoing process of understanding and constructing these generalized Krammer representations.

{\bf Acknowledgements.} I thank F. Digne and N. Matringe for useful discussions, G. Neaime for useful comments, and the anonymous referee for detailed remarks.

\section{The Brauer-Chen algebra and its extensions}

\subsection{Presentations}
\label{sect:presentations}

Let $W$ be a complex (pseudo-)reflection group, $\mathcal{R}$ its
set of reflections, and $\mathcal{A}$ the set of reflecting hyperplanes. 
Two hyperplanes $H_1,H_2$
are called transverse if $H_1 \neq H_2$ and
$\{ H \in \mathcal{A} \ | \ H \supset H_1 \cap H_2 \} = \{ H_1, H_2 \}$. In this case we write $H_1 \captr H_2$. A \emph{transverse collection}
of hyperplanes is a subset $\underline{H} = \{ H_1,\dots H_r \}$ of $\mathcal{A}$ with the property that every two hyperplanes in the collection are transverse. 
Let $\kk$ be a commutative ring with $1$, and $\delta \in \kk^{\times}$ an invertible scalar. We fix the choice of one scalar parameter $\mu_s \in \kk$ per reflection $s$, subject to the condition $\mu_s = \mu_t$ as soon as $s$ and $t$ are conjugate. We denote by $\underline{\mu}$ the collection of such parameters.

By definition, 
the Brauer-Chen algebra $Br(W) = Br_{\underline{\mu}}(W)$ attached to $\underline{\mu}$ is
defined by generators $w \in W$, $e_H, H \in \mathcal{A}$ together with the relations of $W$,
the semi-direct type relations $w e_H = e_{w(H)} w$ and the following relations
\begin{enumerate}
\item $\forall H \in \mathcal{A} \ \  e_H^2 = \delta e_H \ \ \& \ \ e_Hs = s e_H = e_H$ whenever $\Ker(s-1)=H$
\item $H_1 \captr H_2 \Rightarrow e_{H_1} e_{H_2}= e_{H_2} e_{H_1}$
\item If $H_1$ and $H_2$ are distinct and not transverse, then
$$
e_{H_1} e_{H_2} = \left(\sum_{s \in \mathcal{R} \ | \ s(H_2)=H_1 } \mu_s s \right)  e_{H_2}=e_{H_1} \left(\sum_{s \in \mathcal{R} \ | \ s(H_2)=H_1 } \mu_s s \right)
$$
\end{enumerate}
Notice that the two equalities in relation (3) can be deduced one from the
other using the semi-direct type relations.
\begin{remark}
A natural attempt to a further generalization would be to consider relations of the form $e_H^2 = \delta_H e_H$ for scalars $\delta_H$ depending on $H$. By the semi-direct type relations such a scalar should depend only on the orbit of $H$ under $W$. But then, up to rescaling the coefficients $\underline{\mu}$ we can assume all such $\delta_H$'s are the same, at least when all of them  are nonzero. Moreover, if some of them are zero this provides quotients of the orginial algebra by relations of the form $e_H^2$ so this also fits inside the original framework.
\end{remark}

\subsection{Chen's additional relations}

In the relations above, we removed one relation from Chen's original definition, the relation called (1)' in \cite{CHENBR}. This relation is that
$w e_H = e_H w = e_H$ if $w(H)=H$ and there exists  $H_1, H_2 \in \mathcal{A}$
such that $\Ker(w-1)\cap H = H_1\cap H_2$ and $H_1,H_2$ are not transverse.

It is claimed without proof
in \cite{CHENBR} that this condition is equivalent to the following one
$$
(1)'' \ \ \ s e_H = e_H s = e_H \ \ \mbox{ if }
s \in \mathcal{R}, s(H) = H  \mbox{ and } H \mbox{ and } \Ker(s-1)  \mbox{ are (distinct and) not transverse. } 
$$

Actually, the preprint version of \cite{CHENBR} on the arxiv (\verb+arXiv:1102.4389v1+) is the version of (1)' given here, and the claim that (1)' and (1)'' are equivalent can be found only in the published version. But on the other hand, in the published version of (1)' the
element $w$ is additionally assumed to be a reflection, which makes (1)' not only \emph{equivalent}, but \emph{formally equal} to (1)''. So we have to interprete this addition to be a typo (corroborated by some examples in Chen's paper).

The implication $(1)' \Rightarrow (1)''$ is because, if $s \in \mathcal{R}$ is such that $s(H)=H$ and $H$ and $H_s = \Ker(s-1)$ are not transverse, then, setting $H_1 = H_s$, $H_2 = H$ and $w = s$, we get $\Ker(w-1)\cap H = H_1 \cap H_2$ and by $(1)'$ the conclusion.
Conversely, if $w,H_1,H_2$ are as in the assumption of $(1)'$, then $w$ belongs to the parabolic subgroup fixing $H_1 \cap H_2$. If this rank 2 parabolic subgroup is a dihedral group,
then we have the conclusion because
\begin{enumerate}
\item either $w$ is a reflection, and we have the conclusion by (1)''
\item otherwise, setting $s$ the reflection w.r.t. $H$,
we have that $ws$ is a reflection satisfying the same
assumptions, and writing $w = ws.s$ we get the conclusion
by applying (1)' twice.
\end{enumerate}

Therefore, (1)' and (1)'' are equivalent in a number of cases, including all Coxeter groups. It is however not
true, in general that (1)'' implies (1)'. In order to check this, we consider the complex reflection group $W$ of type $G(4,2,2)$ (see section \ref{sect:G422} below), and apply to the given presentations a Gr\"obner basis algorithms, using the GAP4 package GBNP (see \cite{GBNP}), for a given value
of $\delta \in \kk = \Q$. We obtain the dimensions $28$ and $40$
depending whether we add (1)' or (1)'', and $64$ without
both of them.

In \cite{CHENBR} \S 9, it is argued that a reason for relation (1)' to be added is that it may be closer to a
previously introduced algebra in Coxeter type $B$ and more
generally in type $G(d,1,n)$ (see \cite{HO}). Our purpose here being to consider the largest possible finite-dimensional algebra
we consider it better to eliminate this. As noted by Chen, this condition is void in the usual (type $A$) case,
so we get indeed a generalization of the usual Brauer algebra.

\subsection{Flat connection}
In any case, we reprove Proposition 5.1 of \cite{CHENBR} in order to make it clear that these additional relations are not needed for the associated connection to be flat. Actually, we notice that our relation (1) is not needed either. Therefore, we let $Br^0(W)$ denote the algebra defined as $Br(W)$ but with relation (1) removed.
We remark that this algebra is actually defined over $\Z[\underline{\mu}]$
and has infinite rank. To every hyperplane $H$ we associate the logarithmic 1-form $\om_H = (1/\pi \ii ) \dd \alpha_H/\alpha_H$ where $\alpha_H$ is an arbitrary linear form with kernel $H$. This 1-form is defined on the complement
of the hyperplane arrangement $\mathcal{A}$, that we denote $X$.

\begin{proposition} \label{prop:chen1form} (Chen) Assume $\kk = \C$. Then, the following 1-form
$$
\om = \sum_{H \in \mathcal{A}} \left(\left(\sum_{Ker(s-1)=H} \mu_s s \right)  - e_H\right) \om_H \in \Omega^1(X) \otimes Br^0(W)
$$
is integrable and $W$-equivariant.
\end{proposition}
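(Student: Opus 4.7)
The plan is to reduce integrability to a Jacobi-type identity at each codimension-two flat of the arrangement, and then verify it using only relations (2) and (3). Since each logarithmic form $\om_H$ is closed, $\dd\om = 0$, so integrability becomes $\om \wedge \om = 0$. Writing $A_H := \rho_H - e_H$ with $\rho_H := \sum_{\Ker(s-1)=H} \mu_s s$, we have
\[
\om \wedge \om \;=\; \sum_{H \neq H'} A_H A_{H'}\, \om_H \wedge \om_{H'}.
\]
The Arnold/Orlik--Solomon relations $\om_{H_1} \wedge \om_{H_2} + \om_{H_2} \wedge \om_{H_3} + \om_{H_3} \wedge \om_{H_1} = 0$, valid whenever $H_1, H_2, H_3$ share a common codimension-$2$ flat, are the only relations among the 2-forms $\om_H \wedge \om_{H'}$. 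A standard manipulation reduces $\om \wedge \om = 0$ to the local condition: for every codimension-$2$ flat $L$ and every $H_0 \in \mathcal{A}_L := \{H \in \mathcal{A} : H \supset L\}$, the element $\Sigma_L := \sum_{H \in \mathcal{A}_L} A_H$ commutes with $A_{H_0}$.

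Next, I would check $[A_{H_0}, \Sigma_L] = 0$ in two cases. If $|\mathcal{A}_L| = 2$, with $\mathcal{A}_L = \{H_0, H_1\}$: the only rank-$2$ reflection groups with just two reflecting hyperplanes are products of two cyclic groups, so reflections with kernel $H_0$ commute with those of kernel $H_1$. Hence $[\rho_{H_0}, \rho_{H_1}] = 0$, and since commuting reflections preserve each other's hyperplanes, the semi-direct relation yields $[\rho_{H_0}, e_{H_1}] = 0$; together with relation (2) $[e_{H_0}, e_{H_1}] = 0$, this gives $[A_{H_0}, A_{H_1}] = 0$. If $|\mathcal{A}_L| \geq 3$, I would set $\tau_L := \sum_{s \in W_L \cap \mathcal{R}} \mu_s s$ and $E_L := \sum_{H \in \mathcal{A}_L} e_H$, so that $\Sigma_L = \tau_L - E_L$. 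Centrality of the class sum $\tau_L$ in $\kk W_L$ (inherited from $W$-conjugation invariance of $\mu$) gives $[\rho_{H_0}, \tau_L] = 0$, and $W_L$-stability of $\mathcal{A}_L$ gives $[\rho_{H_0}, E_L] = 0$ after a reindexing. The problem reduces to showing $[e_{H_0}, \tau_L] = [e_{H_0}, E_L]$.

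For this last identity the plan is to use relation (3) exactly once. For $H \neq H_0$ in $\mathcal{A}_L$ it gives $e_{H_0} e_H = \rho_{H \to H_0} e_H$ and $e_H e_{H_0} = \rho_{H_0 \to H} e_{H_0}$, where $\rho_{H' \to H''} := \sum_{s \in \mathcal{R},\, s(H')=H''} \mu_s s$. Using that any reflection sending one hyperplane through $L$ to a distinct one must lie in $W_L$, together with the semi-direct identity $s e_{s^{-1}(H_0)} = e_{H_0} s$, the off-diagonal sum $\sum_{H \neq H_0} \rho_{H \to H_0}\, e_H$ collapses to $e_{H_0}(\tau_L - \tau_L^{H_0})$, where $\tau_L^{H_0}$ is the partial sum of $\mu_s s$ over reflections in $W_L$ that fix $H_0$ setwise; an analogous collapse handles the other off-diagonal sum. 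After cancellation the difference $[e_{H_0}, E_L] - [e_{H_0}, \tau_L]$ reduces to $-[e_{H_0}, \tau_L^{H_0}]$, which vanishes because each reflection in $\tau_L^{H_0}$ commutes with $e_{H_0}$ individually, via $s e_{H_0} = e_{s(H_0)} s = e_{H_0} s$.

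The $W$-equivariance is immediate: $w^* \om_H = \om_{w^{-1}(H)}$, and $w A_H w^{-1} = A_{w(H)}$ (by conjugation invariance of $\mu$ and the semi-direct relation), so $w^* \om = w \om w^{-1}$. The main obstacle will be the non-transverse local bracket: the natural temptation is to invoke $e_H^2 = \delta e_H$, but this is avoidable precisely because $W_L \cap \mathcal{R}$ splits cleanly into reflections fixing $H_0$ (which commute with $e_{H_0}$ directly) and reflections moving $H_0$ (whose contributions are exactly captured by relation (3)). This is why flatness survives the removal of relation~(1).
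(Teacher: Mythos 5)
Your proposal is correct and follows essentially the same route as the paper: reduction to Kohno's local commutation conditions at each codimension-two flat, the crossing-edge case by direct commutation, and the remaining case by combining centrality of the weighted reflection sum in $\kk W_L$ with relation (3) to convert $e_{H_0}E_L$ into $e_{H_0}\tau_L$ up to terms commuting with $e_{H_0}$. Your explicit splitting-off of the partial sum $\tau_L^{H_0}$ over reflections stabilizing $H_0$ (and of the diagonal term $e_{H_0}^2$) is in fact a more careful rendering of the displayed chain of equalities in the paper, which elides exactly this correction; the two computations agree on the commutator.
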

\begin{proof} 
We let $t_H = \left(\sum_{Ker(s-1)=H} \mu_s s \right)  - e_H$. It is clear that $w t_H w^{-1} =  t_{w(H)}$ hence we only need to prove
that Kohno's holonomy relations of \cite{KOHNO} are satisfied. We recall these relations now. Let $Z$ be a codimension 2 flat, and $t_Z = \sum_{H \supset Z} t_H$. One needs to prove $[t_Z,t_H]=0$ for all $H \supset Z$. If $Z$ is what Chen calls a crossing edge, that is if it is contained in exactly two hyperplanes $H_1$ and $H_2$, then it is clear that all the elements involved in $t_{H_1}$ and $t_{H_2}$ commute with each other, whence $[t_{H_1},t_{H_2}]=0$ hence 
$[t_Z,t_H]=0$ for all $H \supset Z$.

If not, letting $\varphi_H = \sum_{Ker(s-1)=H} \mu_s s$ and $\varphi_Z = \sum_{H \supset Z} \varphi_H$, we first notice
that $[\varphi_H,\varphi_Z]=0$ from the integrability of the Cherednik connection (see e.g. \cite{BMR}). Let us pick $H_0 \supset Z$.
We have
$$
[t_{H_0},t_Z]=
[\varphi_{H_0} - e_{H_0},\varphi_Z - \sum_H e_H]=
-[\varphi_{H_0},\sum_H e_H] - [e_{H_0},\varphi_Z - \sum_H e_H].
$$ 
But, for all $s \in \mathcal{R}$ with $\Ker(s-1) \supset Z$, we have $s (\sum_H e_H) = (\sum_H e_H) s$ hence $[\varphi_{H_0},\sum_H e_H]=0$.
It remains to compute
$
[e_{H_0},\varphi_Z - \sum_H e_H]$.
We have
$$
e_{H_0}\sum_H e_H = \sum_H e_{H_0} e_H
= \sum_H e_{H_0} \sum_{s(H) = H_0} \mu_s s 
= e_{H_0} \sum_{s \in \mathcal{R} ; \Ker(s-1) \supset Z}
  \mu_s s 
  = e_{H_0} \varphi_Z
$$
and similarly  $(\sum_H e_H) e_{H_0}=\varphi_Z e_{H_0}$
hence 
$$
[e_{H_0},\varphi_Z - \sum_H e_H]=0
$$
and this proves the claim.

\end{proof}

\subsection{Finiteness of dimension}

Chen proved that his algebra has finite rank as a $\kk$-module. Actually, following the same lines of proof, one can prove the following more general
statement.

\begin{proposition} \label{prop:findimchen}
Let $\kk$ be a commutative ring with $1$,
 $\underline{Q}=(Q_H)_{H \in \mathcal{A}}$
 a family of polynomials in one indeterminate over $\kk$
 such that $H_1 \sim H_2 \Rightarrow Q_{H_1}=Q_{H_2}$. Then the quotient 
 $Br^{\underline{Q}}(W)$ of $Br^0(W)$ by the relations $e_HQ_H(e_H)=0$ has finite rank over $\kk$.
\end{proposition}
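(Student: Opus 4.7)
The plan is to adapt Chen's strategy for proving finiteness of the rank of $Br(W)$, with an additional step to handle the more general polynomial relations $e_H Q_H(e_H)=0$ in place of the quadratic relation $e_H^2=\delta e_H$.

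First I would use the semi-direct relations $w e_H = e_{w(H)} w$ to push every group element to the left in any monomial of $Br^0(W)$. This reduces the question to bounding, up to $\kk$-linear combinations and left multiplication by $W$, the set of admissible words of the form $e_{H_1}^{a_1} e_{H_2}^{a_2} \cdots e_{H_r}^{a_r}$.

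Second I would use relations (2) and (3) to put such a word into a normal form. If two consecutive factors $e_{H_i}, e_{H_{i+1}}$ correspond to distinct non-transverse hyperplanes, relation (3) rewrites $e_{H_i} e_{H_{i+1}}$ as an element of $\kk W$ times $e_{H_{i+1}}$, strictly decreasing the number of $e$-factors after the new group element is pushed back to the left. Iterating, one reaches words in which all consecutive distinct $e_{H_j}$'s are pairwise transverse; relation (2) then permits reordering them according to a fixed total order on $\mathcal{A}$. The surviving words have the shape $w \cdot e_{H_{i_1}}^{a_1} \cdots e_{H_{i_s}}^{a_s}$ with $\{H_{i_1},\dots,H_{i_s}\}$ a transverse collection listed in the chosen order.

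Third, the new relations enter. Assuming (as seems implicit, and as holds in every case of interest including $Q_H(X)=X-\delta$) that each $Q_H$ is monic of degree $d_H$, the identity $e_H Q_H(e_H)=0$ expresses $e_H^{d_H+1}$ as a $\kk$-linear combination of $e_H,\dots,e_H^{d_H}$; inductively each exponent $a_j$ in the normal form may be taken in $\{1,\dots,d_{H_{i_j}}\}$. Since $W$ is finite, the set of transverse collections of $\mathcal{A}$ is finite, and each $d_{H_{i_j}}$ is bounded (it depends only on the $W$-orbit of $H_{i_j}$ by the hypothesis $H_1\sim H_2\Rightarrow Q_{H_1}=Q_{H_2}$), the resulting set of normal forms is finite, and generates $Br^{\underline{Q}}(W)$ as a $\kk$-module.

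The main obstacle is verifying that the rewriting process terminates and that every element actually reduces to a $\kk$-linear combination of normal forms. This is the delicate part handled by Chen via a well-founded order on words (typically lexicographic on a triple like (number of non-transverse adjacent pairs, total $e$-length, position of leftmost defect)); I would carry over that order essentially unchanged. The only genuinely new point is that the $Q_H$-relation must not break confluence with (2) and (3), but since it involves only powers of a single $e_H$ and no hyperplane-indexed interactions, this check is straightforward.
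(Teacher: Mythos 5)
Your proposal is correct and follows essentially the same route as the paper: push $W$ to the left via the semidirect relations, use relation (3) to eliminate non-transverse adjacent pairs, relation (2) to regroup equal factors, and the polynomial relation to bound exponents, so that words indexed by transverse collections with bounded exponents span. The paper packages this as a minimal-counterexample argument on the number of $e$-factors (which sidesteps the confluence/termination worry you raise) and additionally bounds the size of a transverse collection by $\rk(W)$ via an orthogonality argument, though, as you note, finiteness of $\mathcal{A}$ already suffices; your explicit monicity assumption on the $Q_H$ is likewise implicit in the paper's proof.
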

\begin{proof}
By the semidirect product relations it is clear that every element $Br^0(W)$ is a linear combinations of terms of the form $w e_{H_1} \dots e_{H_r}$ for $w \in W$. 
If $m$ is the maximum of the degrees of the $Q_H$, we claim that we need no term with $r$ larger than $m \rk(W)$.
 We argue by contradiction, and consider a term with $r$ minimal but greater than $m \rk(W) $ which can not be rewritten using smaller $r$'s. First of all, for every $i < r$ we have that the $H_i$ and $H_{i+1}$ are either equal or transverse, by minimality of $r$ and relation (3). In particular, $e_{H_i}$ commutes with $e_{H_{i+1}}$, and actually (by induction) with every $e_{H_j}$ for $j > i$ for the same reason. Therefore we can
assume that 
$$
(H_1,\dots, H_r) = (\underbrace{J_1,\dots,J_1}_{u_1},\dots , \underbrace{J_k,\dots,J_k}_{u_k})
$$
with $\{J_1,\dots, J_k \}$ of cardinality $k$, and in particular a transverse collection of hyperplanes. By the polynomial relation on the $e_H$'s and the minimality of $r$, we have $u_i \leq  m$ for all $i$, hence $r \leq m \times k$.
Let us pick for each $i$ a nonzero vector $v_i$ in the
orthogonal complement of $J_i$ (with respect to some unitary form preserved by $W$). For $i \neq j$ we have that $J_i$ and $J_j$ are transverse, and thus $v_i$ and $v_j$ are orthogonal. Therefore the $v_i$ form an orthogonal family of cardinality $k$, thus $k \leq \rk(W)$ and $r \leq m  \rk(W)$, a contradiction. This proves the claim. 
\end{proof}

When $\kk$ is a field, it is clear that all finite-dimensional representations of $Br^0(W)$
factorize through $Br^{\underline{Q}}(W)$
for some $\underline{Q}$. Moreover, note that,
picking one root of $Q_H$ for each conjugacy class of hyperplanes provide a surjective morphism from $Br^{\underline{Q}}(W)$ to $Br(W)$
with parameter(s) corresponding to the root(s).
Actually, from the proof of the classification of the irreducible
representations of $Br_1(W)$ given below,
it will be clear that all the irreducible
representations of $Br_1^{\underline{Q}}(W)$
(over an algebraically closed field) factor through one of them.

\subsection{A generalized Vogel algebra}

Finally, we prove here that, when $W$ is a 2-reflection group, the Brauer-Chen
algebra appears as a quotient of an algebra $\mathfrak{Q}(W)$ depending on two scalar parameters $\alpha,\beta$, and defined by generators and relations as follows. Generators are $t_H, H \in \mathcal{A}$, $w \in W$, and the 
relations are the relations of $W$ together with the $W$-invariance and holonomy relations
$$
w t_H = t_{w(H)} w, \left[ t_H,\sum_{H'\supset Z} t_{H'}\right]=0
$$
where $Z$ runs among the codimension 2 flats, and
$$
\left\lbrace \begin{array}{l}
t_H s_H = s_H t_H = t_H \\
t_H^2 - (\alpha+\beta) t_H + \frac{\alpha \beta}{2}(1 + s_H) = 0 \end{array} \right.
$$
for all $H \in \mathcal{A}$. This algebra is a generalization of an algebra introduced by P. Vogel in
the framework of Vassiliev invariants, see \cite{GQ}.

Since $W$ is a 2-reflection group, in the definition of the Brauer-Chen algebra one can set $\la_H = \mu_{s_H}$ for $s_H$ the only reflection with hyperplane $H$.
Let us set $t_H = \la_H(1+s_H) -e_H \in Br(W)$. Then it is readily checked that $t_H s_H = t_H$,
and $t_H^2 = 2\la_H^2(1+s_H) + \delta e_H - 4 \la_H e_H
= 2\la_H^2(1+s_H) + (\delta - 4 \la_H) e_H$. Then $t_H^2 + (\delta-4 \la_H) t_H
= 2\la_H^2(1+s_H) + (\delta - 4 \la_H) e_H
+(\delta-4 \la_H)\la_H(1+s_H) -(\delta-4 \la_H)e_H
=   
(\delta-2 \la_H)\la_H(1+s_H) $, hence 
$t_H^2 - (\alpha+\beta) t_H + \frac{\alpha \beta}{2}(1 + s_H) = 0$ with $\alpha = 2 \la_H -\delta$
and $\beta = 2 \la_H$.

This proves the following.
\begin{proposition}
If $W$ is a 2-reflection group and $\mu_{s_H} = \la_H$,
$\alpha = 2 \la_H -\delta$
and $\beta = 2 \la_H$,
then there exists a surjective morphism
$\mathfrak{Q}(W) \onto Br(W)$ which is the identity on $W$
and maps $t_H \mapsto \la_H(1+s_H)-e_H$.
\end{proposition}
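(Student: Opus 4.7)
The plan is to construct the morphism $\Phi : \mathfrak{Q}(W) \onto Br(W)$ by sending each $w \in W$ to itself and each $t_H$ to $T_H := \la_H(1+s_H) - e_H$, then verify that the images $T_H$ satisfy in $Br(W)$ every defining relation of $\mathfrak{Q}(W)$ with the specified values of $\alpha$ and $\beta$. The relations of $W$ hold automatically because $\Phi$ restricts to the identity on the subgroup $W$. The two ``local'' relations, namely $T_H s_H = s_H T_H = T_H$ and $T_H^2 - (\alpha+\beta)T_H + \tfrac{\alpha\beta}{2}(1+s_H) = 0$, have already been checked by direct computation in the paragraph preceding the proposition.

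For the $W$-equivariance relation $w T_H = T_{w(H)} w$, I would use the semi-direct product relations $w s_H w^{-1} = s_{w(H)}$ and $w e_H w^{-1} = e_{w(H)}$ in $Br(W)$, combined with the fact that $\la_H$ depends only on the $W$-orbit of $H$ (since $\mu$ is a class function and the reflections $s_H$, $s_{w(H)}$ are conjugate in $W$).

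The one step that requires real thought is the holonomy relation $\bigl[T_H, \sum_{H' \supset Z} T_{H'}\bigr] = 0$ for each codimension-$2$ flat $Z$. The key observation is that in the 2-reflection setting one has $\varphi_H = \la_H s_H$, so
$$
T_H = \la_H \cdot \mathbbm{1} + (\la_H s_H - e_H),
$$
where $\la_H s_H - e_H$ is exactly the element denoted $t_H$ in the proof of Proposition~\ref{prop:chen1form}. Since scalar multiples of $\mathbbm{1}$ are central, the scalar summands drop out of every commutator and the holonomy identity for the $T_H$'s reduces to the corresponding identity $[t_H^{\mathrm{Chen}}, \sum_{H' \supset Z} t_{H'}^{\mathrm{Chen}}] = 0$ already established in Proposition~\ref{prop:chen1form}. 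This gives well-definedness of $\Phi$.

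Surjectivity is then immediate: $W \subset \Imm\Phi$ by construction, and for every $H \in \mathcal{A}$ the element $e_H = \la_H(1+s_H) - T_H = \Phi(\la_H(1+s_H) - t_H)$ lies in $\Imm\Phi$, so $\Phi$ surjects onto the generating set $W \cup \{e_H\}_{H\in\mathcal{A}}$ of $Br(W)$. I do not anticipate any genuine obstacle: the only non-formal input is the flatness of Chen's connection, which is already in hand, and the content of the argument is the simple remark that the shift from $t_H^{\mathrm{Chen}}$ to $T_H$ by the central scalar $\la_H$ preserves all commutator relations.
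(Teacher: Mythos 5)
Your proposal is correct and follows essentially the same route as the paper: the paper's proof consists precisely of the direct verification of the two local relations for $T_H = \la_H(1+s_H)-e_H$, with surjectivity and the remaining relations left implicit. Your explicit treatment of the holonomy relations — writing $T_H$ as the central scalar $\la_H$ plus the element $\la_H s_H - e_H$ appearing in Proposition \ref{prop:chen1form} (which for a $2$-reflection group is exactly $\varphi_H - e_H$), so that all commutators are unchanged — is a correct and welcome filling-in of a step the paper does not spell out.
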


An intriguing open question is whether these algebras $\mathfrak{Q}(W)$ are finite dimensional in general. It is conjectured to be the case when $W = \mathfrak{S}_n$, and known to be true for $n \leq 5$ by \cite{GQ}.

When $W$ is a finite Coxeter group with generating set $S$, another presentation of $\mathfrak{Q}(W)$
is easily seen to be given by generators $t_H = t_{s_H}, H \in \mathcal{A}$, $s \in S$, together with the Coxeter relations, the holonomy relations, $s t_u = t_{sus} s$ for $s\in S$
and $u\in \mathcal{R}$, 
and
$$
\left\lbrace \begin{array}{l}
t_s s = s t_s = t_s \\
t_s^2 - (\alpha+\beta) t_s + \frac{\alpha \beta}{2}(1 + s) = 0 \end{array} \right.
$$
for all $s \in S$.

\subsection{Lattice extensions}

We operate a mixture of these ideas together with
the ones of \cite{YH,YH2}. Let $\mathcal{L}$ denote an admissible lattice in the sense of \cite{YH2}, that is a $W$-invariant sublattice of the lattice of all 
full reflection subgroups of $W$, where full means that if $s \in W$ is a reflection, then $W$ contains all the
reflections fixing $\Ker(s-1)$, with
the following properties of containing the
cyclic (full) reflection subgroups and the trivial subgroups.

We consider its Möbius algebra $\kk \mathcal{L}$, and denote
$f_L, L \in  \mathcal{L}$ the natural basis element. Recall from \cite{YH2} that full reflection subgroups can be naturally indexed by the collection of their reflection hyperplanes, and so in particular we denote $f_H \in \kk \mathcal{L}$, for $H \in \mathcal{A}$, the natural basis element
associated to the full reflection subgroup fixing $H$. 

The following proposition was proved in \cite{YH} under
the additional unnecessary assumption that $\mathcal{L}$ is the lattice of all parabolic subgroups. Here we provide the
general proof.

\begin{proposition} \label{prop:integrYH} Assume $\kk = \C$. Let us choose a collection of scalars $\la_H, H \in \mathcal{A}$ such that $\la_{w(H)} = \la_H$ for all $w \in W$. Then the following 1-form
$$
\om = \sum_{H \in \mathcal{A}} \left(\la_H + \sum_{Ker(s-1)=H} \mu_s s \right)   f_H \om_H \in \Omega^1(X) \otimes \C W\ltimes \C\mathcal{L}
$$
is integrable and $W$-equivariant over the hyperplane complement $X$.
\end{proposition}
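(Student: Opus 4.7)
The plan is to parallel the proof of Proposition~\ref{prop:chen1form}, accommodating the additional Möbius-algebra factor. Set $T_H = (\lambda_H + \varphi_H) f_H$, where $\varphi_H = \sum_{\mathrm{Ker}(s-1)=H} \mu_s s$, so that $\om = \sum_H T_H \om_H$. By the classical Aomoto--Kohno criterion, integrability reduces to (a) $W$-equivariance $w T_H w^{-1} = T_{w(H)}$, and (b) Kohno's holonomy relations $[T_{H_0}, T_Z] = 0$ for every codimension~$2$ flat $Z$ and every $H_0 \supset Z$, with $T_Z := \sum_{H \supset Z} T_H$.

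Part (a) is immediate from the $W$-invariance hypothesis $\lambda_{w(H)} = \lambda_H$, the class-function identity $w \varphi_H w^{-1} = \varphi_{w(H)}$, and the semi-direct product relation $w f_H w^{-1} = f_{w(H)}$ in $\C W \ltimes \C \mathcal{L}$.

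For part (b) I would split into two cases, as in Chen's proof. If $Z$ is a crossing edge, contained only in two transverse hyperplanes $H_1, H_2$, then the rank-$2$ parabolic fixing $Z$ splits as a direct product of the cyclic groups fixing $H_1$ and $H_2$; hence the reflections appearing in $\varphi_{H_1}$ commute with those appearing in $\varphi_{H_2}$, each such reflection fixes both $H_1, H_2$ setwise (so commutes with $f_{H_1}$ and $f_{H_2}$ via the semi-direct structure), and $f_{H_1}, f_{H_2}$ commute among themselves since the join in $\mathcal{L}$ is commutative. Therefore $[T_{H_1}, T_{H_2}] = 0$ trivially. In the general case, I would expand the commutator along the decomposition $T_H = \lambda_H f_H + \varphi_H f_H$ and invoke three inputs: the Cherednik integrability $[\varphi_{H_0}, \sum_{H \supset Z} \varphi_H] = 0$ (see \cite{BMR}); the Möbius-algebra integrability $[f_{H_0}, \sum_{H \supset Z} f_H] = 0$ in $\kk \mathcal{L}$ (essentially as in \cite{YH2}); and a mixed cancellation of the $\varphi$-$f$ cross terms, modelled on Chen's identity $e_{H_0} \sum_H e_H = e_{H_0} \varphi_Z$ from the proof of Proposition~\ref{prop:chen1form}.

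The hard step will be this mixed cross-term cancellation. Concretely, one must push $\varphi_{H_0}$ across $\sum_{H \supset Z} f_H$ using $s f_H = f_{s(H)} s$ and check that the result regroups into itself, exploiting that any reflection appearing in $\varphi_{H_0}$ fixes $Z$ setwise and hence permutes $\{H : H \supset Z\}$; dually, $f_{H_0} \cdot \varphi_H f_H$ for $H \supset Z$ must reorganize consistently with the $\lambda$-part of $T_Z$ thanks to $W$-invariance of $\lambda$. The admissibility of $\mathcal{L}$ enters only through the containment of all cyclic full reflection subgroups, which guarantees that every $f_H$ lies in $\kk \mathcal{L}$ and that the joins $f_{H_0} f_H$ for $H \supset Z$ remain in $\mathcal{L}$; this is precisely what makes the telescoping go through in full generality and removes the assumption from \cite{YH} that $\mathcal{L}$ is the full parabolic lattice.
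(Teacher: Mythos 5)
Your skeleton ($W$-equivariance plus Kohno's relations $[T_{H_0},T_Z]=0$ for each codimension-$2$ flat $Z$) is the right one, and you correctly locate the difficulty in the interaction between the group-algebra and M\"obius-algebra factors. But the decisive identity is missing, and none of the three inputs you list supplies it. A typical term of $T_{H_0}T_H$ for $H\supset Z$, $H\neq H_0$, is $u\,f_{H_0}\,v\,f_H=uv\,f_{v^{-1}(H_0)}f_H$, where $u$ is $1$ or a reflection fixing $H_0$ and $v$ is a reflection with $\Ker(v-1)=H$. Since $v$ need not fix $H_0$, the factor $f_{v^{-1}(H_0)}$ is a priori different from $f_{H_0}$, and matching this against the terms of $T_HT_{H_0}$ requires the M\"obius-algebra identity $f_{v^{-1}(H_0)}f_H=f_{H_0}f_H$, i.e.\ the equality of joins $W_{v^{-1}(H_0)}\vee W_H=W_{H_0}\vee W_H$. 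This holds because $v\in W_H$ gives $\langle v^{-1}s_{H_0}v,\,s_H\rangle=\langle s_{H_0},s_H\rangle$; one checks it in the maximal lattice $\mathcal{L}_\infty$ and it then holds in any admissible sublattice. This identity (together with the reindexing $H\mapsto u(H)$ of $\{H\supset Z\}$ under $u\in W_{H_0}$ and the $W$-invariance of the coefficients) is the entire content of the proof, and is precisely what removes the parabolic-lattice hypothesis of \cite{YH}; your closing remark about admissibility only records that the relevant $f$'s and joins lie in $\kk\mathcal{L}$, which is automatic, not that the two joins coincide.

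Your proposed substitutes do not close this gap. (i) Cherednik integrability cannot be invoked as a black box: after applying the join identity one finds $[\varphi_{H_0}f_{H_0},\sum_{H\supset Z}\varphi_Hf_H]=\sum_{H}[\varphi_{H_0},\varphi_H]\,f_{H_0\vee H}$, and the factors $f_{H_0\vee H}$ genuinely depend on $H$, so $[\varphi_{H_0},\varphi_Z]=0$ cannot be factored out; the cancellation happens at a finer level (the paper's proof never uses the Cherednik relation). (ii) $[f_{H_0},\sum_Hf_H]=0$ is vacuous since $\kk\mathcal{L}$ is commutative, and is not the relevant fact. (iii) The ``dual'' step, where you claim $f_{H_0}\cdot\varphi_Hf_H$ reorganizes ``thanks to $W$-invariance of $\la$'', is exactly where the join identity is needed; the $\la$'s play no role there. (Your separate treatment of crossing edges is harmless but unnecessary for this proposition; unlike Proposition \ref{prop:chen1form}, no case distinction is needed here.)
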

\begin{proof} 
Let us assume that we have picked a `distinguished' reflection $s_H$ for each $H \in \mathcal{A}$ with the property that $w s_H w^{-1} = s_{w(H)}$ for all $w \in \mathcal{A}$ and $\langle s_H \rangle = W_H$. We denote $\la_H^{(0)} =  \la_H$ and $\la_H^{(i)} = \mu_{s_H}^i$ for $i > 0$ and $H \in \mathcal{A}$. We set $t_H =  \sum_{0\leq i < m_H} \la_H^{(i)} s_H^i f_H$, with $m_H$ the order of $s_H$.
Let $Z$ be a codimension 2 flat, and $t_Z = \sum_{H \supset Z} t_H$. One needs to prove $[t_Z,t_{H_0}]=0$ for all $H \supset Z$. For this we only need to prove that
$[s f_{H_0},t_Z]=0$ for $s \in \langle s_H \rangle$.
We do this. We have
$$
s f_{H_0} t_Z = \sum_{\stackrel{H \supset Z}{0 \leq i < m_H}} 
s f_{H_0} \la_H^{(i)} s_H^i f_H
= \sum_{\stackrel{H \supset Z}{0 \leq i < m_H}}\la_H^{(i)} 
ss_H^i f_{s_H^{-i}(H_0)}   f_H  
$$
and
$$
t_Z s f_{H_0}=\sum_{\stackrel{H \supset Z}{0 \leq i < m_H}} \la_H^{(i)} s_H^i f_Hs f_{H_0} =
\sum_{\stackrel{H \supset Z}{0 \leq i < m_H}} \la_H^{(i)} s (s^{-1}s_H s)^if_{s^{-1}(H)} f_{H_0} 
$$
{}
$$
=
\sum_{\stackrel{H \supset Z}{0 \leq i < m_H}} \la_{s^{-1}(H)}^{(i)} s s_{s^{-1}(H)}^i f_{s^{-1}(H)} f_{H_0} 
=
\sum_{\stackrel{H \supset Z}{0 \leq i < m_H}} \la_{H}^{(i)} s s_{H}^i f_{H} f_{H_0} 
$$
and so we only need to check that $f_{H} f_{H_0} = 
f_{s_H^{-i}(H_0)}   f_H$. It is sufficient to prove this for the maximal admissible lattice $\mathcal{L}=\mathcal{L}_{\infty}$ of all full reflection subgroups. In this case, $f_H f_{H_0} = f_{G}$
for $G$ the smallest full reflection subgroup containing $\langle s_H, s_{H_0}\rangle$.
Since $\langle s_{s_H^{-i}(H_0)},s_H \rangle=
\langle s_H^{-i}s_{H_0}s_H^{i},s_H \rangle
=\langle s_{H_0},s_H \rangle$ we indeed get 
$f_{H} f_{H_0} = 
f_{s_H^{-i}(H_0)}   f_H$, and this proves the claim, $W$-invariance being obvious.

\end{proof}

 We now denote $Br^0(W,\mathcal{L})$ the algebra presented by generators $w \in W$, $e_H, H \in \mathcal{A}$, $f_L, L \in \mathcal{L}$, together
with the relations
\begin{itemize}
\item $w = w_1w_2$ if $w = w_1w_2$ inside $W$
\item $w e_H = e_{w(H)}w$
\item $H_1 \captr H_2 \Rightarrow e_{H_1} e_{H_2}= e_{H_2} e_{H_1}$
\item If $H_1$ and $H_2$ are distinct and not transverse, then
$$
e_{H_1} e_{H_2} = \left(\sum_{s \in \mathcal{R} \ | \ s(H_2)=H_1 } \mu_s s f_{H_s} \right)  e_{H_2}=e_{H_1} \left(\sum_{s \in \mathcal{R} \ | \ s(H_2)=H_1 }\mu_s sf_{H_s} \right)
$$
\item $e_H f_L = f_L e_H$
\item $e_H f_H = f_H e_H = e_H$ 
\item $f_{L_1} f_{L_2} = f_{L_1 \vee L_2}$
\end{itemize}

The following then provides an upgrading of both Propositions \ref{prop:chen1form} and \ref{prop:integrYH}.

\begin{theorem}
\label{theo:flat} 
Assume $\kk = \C$. Let us choose a collection of scalars $\la_H \in \C$, $H \in \mathcal{A}$ such that $\la_{w(H)} = \la_H$ for all $w \in W$. Then the following 1-form
$$
\om = \sum_{H \in \mathcal{A}} \left(\left(\la_H + \sum_{Ker(s-1)=H} \mu_s s \right)  - e_H\right) f_H \om_H \in \Omega^1(X) \otimes Br^0(W,\mathcal{L})
$$
is integrable and $W$-equivariant.
\end{theorem}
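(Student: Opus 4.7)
My plan is to reduce this theorem to Propositions~\ref{prop:chen1form} and~\ref{prop:integrYH} by decomposing the coefficient of $\om_H$ as $t_H = v_H + \tilde a_H$, with $v_H = \la_H f_H$ and $\tilde a_H = \varphi_H f_H - e_H$ (using $e_H f_H = e_H$), where $\varphi_H = \sum_{\Ker(s-1)=H}\mu_s s$. $W$-equivariance $w t_H w^{-1} = t_{w(H)}$ is immediate from the semidirect product relations together with the $W$-invariance of $\la_H$ and $\mu_s$.

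For integrability, I verify Kohno's holonomy relation $[t_{H_0}, t_Z] = 0$ at every codimension-$2$ flat $Z$ and every $H_0 \supset Z$, with $t_Z = \sum_{H \supset Z} t_H$. The crossing-edge case, in which $Z$ lies only in two transverse hyperplanes, follows from the commutation relations and from $f_L f_{L'} = f_{L \vee L'}$: by transversality the reflections in $W_{H_2}$ preserve $H_1$ (and vice versa), so all the pieces of $t_{H_1}$ and $t_{H_2}$ commute. In the general case I expand $[t_{H_0}, t_Z]$ into the four brackets arising from $t = v + \tilde a$. The pure $v$-bracket vanishes since the $f_L$'s commute, and the two mixed brackets reduce to the identity $f_{s^{-1}(H_0)}f_H = f_H f_{H_0}$ for $s \in W_H$, which is precisely the calculation already performed in the proof of Proposition~\ref{prop:integrYH} via the maximal admissible lattice.

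The remaining bracket $[\tilde a_{H_0}, \tilde a_Z]$ expands further as
$$
[\varphi_{H_0} f_{H_0}, \Phi_Z] - [\varphi_{H_0} f_{H_0}, E_Z] - [e_{H_0}, \Phi_Z] + [e_{H_0}, E_Z],
$$
where $\Phi_Z = \sum_{H \supset Z}\varphi_H f_H$ and $E_Z = \sum_{H \supset Z}e_H$. The first term is zero by Proposition~\ref{prop:integrYH} taken with $\la_H = 0$; the second is zero because $f_{H_0}$ commutes with each $e_H$ while $\varphi_{H_0}$ commutes with $E_Z$ (reflections fixing $H_0$ pointwise permute the hyperplanes through $Z$); and the third rewrites as $\sum_{H \supset Z}[e_{H_0}, \varphi_H]f_H$ since $e_{H_0}$ commutes with each $f_H$.

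The main step, and the principal obstacle, is to show that the fourth bracket $[e_{H_0}, E_Z]$ equals $[e_{H_0}, \Phi_Z]$ as well, so that it cancels the third. I plan to adapt Chen's key calculation from Proposition~\ref{prop:chen1form} to the lattice-enriched setting: apply relation~(3) of $Br^0(W,\mathcal{L})$ in the form $e_{H_0} e_H = e_{H_0}\sum_{s(H)=H_0}\mu_s s f_{H_s}$ to each off-diagonal summand of $e_{H_0} E_Z$ (the diagonal contribution is killed by the commutator), and reorganize the double sum by the reflection $s$, for which $H = s^{-1}(H_0)$ is unique. The resulting expression collapses to $e_{H_0}(\Phi_Z - \varphi_{H_0} f_{H_0})$ because the factor $f_{H_s}$ appearing in the new relation~(3) matches exactly the $f_H$ appearing in $\Phi_Z$; a symmetric manipulation on the right produces $(\Phi_Z - \varphi_{H_0} f_{H_0})e_{H_0}$. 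Since $\varphi_{H_0}$ commutes with both $e_{H_0}$ and $f_{H_0}$, the correction terms $\varphi_{H_0} f_{H_0}$ drop out of the commutator, yielding the desired identity. The delicate point is verifying that the lattice factors $f_{H_s}$ do not disturb Chen's $(H,s)$-counting—this works because $H_s$ is determined by $s$ alone, so the bijection between the pairs $(H, s)$ with $s(H)=H_0$ and the reflections $s$ with $H_s \supset Z$ carries over unchanged from Chen's proof.
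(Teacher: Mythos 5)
Your proposal is correct and follows essentially the same route as the paper's proof: verify Kohno's holonomy relations, invoke Proposition~\ref{prop:integrYH} for the $\varphi_H f_H$ part, and redo Chen's reindexing of $e_{H_0}\sum_H e_H$ over reflections using the lattice-decorated relation (3), where the factors $f_{H_s}$ make the sum collapse onto $\psi_Z - \sum_{H\supset Z}\la_H f_H$. The only difference is bookkeeping: you split off $\la_H f_H$ as a separate summand from the start, whereas the paper absorbs $\la_H$ into $\varphi_H$ and cancels the $\la_H f_H$ terms at the end via $e_H f_L = f_L e_H$.
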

\begin{proof}
As in the proof of Proposition \ref{prop:chen1form},
we start by setting $\varphi_H = \la_H + \sum_{Ker(s-1)=H} \mu_s s$, $t_H = (\varphi_H - e_H)f_H$. It is clear that $w t_H w^{-1} =  t_{w(H)}$ hence we only need to prove
that Kohno's relations are satisfied. Let $Z$ be a codimension 2 flat, and $t_Z = \sum_{H \supset Z} t_H$. One needs to prove $[t_Z,t_H]=0$ for all $H \supset Z$. If $Z$ is a crossing edge, that is it is contained in exactly two hyperplanes $H_1$ and $H_2$, then it is clear that all the elements involved in $t_{H_1}$ and $t_{H_2}$ commute with each other, whence $[t_{H_1},t_{H_2}]=0$ hence 
$[t_Z,t_H]=0$ for all $H \supset Z$.

If not, letting $\psi_Z = \sum_{H \supset Z} \varphi_Hf_H$, let us pick $H_0 \supset Z$. We first notice that $[\varphi_Hf_H,\psi_Z]=0$ from
Proposition \ref{prop:integrYH}.

We have
$$
[t_{H_0},t_Z]=
[\varphi_{H_0}f_{H_0} - e_{H_0},\psi_Z - \sum_H e_H]=
-[\varphi_{H_0}f_{H_0},\sum_H e_H] - [e_{H_0},\psi_Z - \sum_H e_H]
$$ 
But, for all $s \in \mathcal{R}$ with $H_0=\Ker(s-1) \supset Z$, we have 
$$
sf_{H_0} (\sum_H e_H) =
f_{H_0}s (\sum_H e_H) = f_{H_0}(\sum_H e_H) s
= (\sum_H e_H) sf_{H_0}
$$ hence $[\varphi_{H_0},\sum_H e_H]=0$.
It remains to compute
$
[e_{H_0},\psi_Z - \sum_H e_H]$.
We have
$$
e_{H_0}\sum_H e_H = \sum_H e_{H_0} e_H
= \sum_H e_{H_0} \sum_{s(H) = H_0} \mu_s f_{H_s} s 
= e_{H_0} \sum_{s \in \mathcal{R} ; \Ker(s-1) \supset Z}
  \mu_s f_{H_s}s 
  = e_{H_0} \left(\psi_Z - \sum_{H\supset Z} \la_H f_H\right)
$$
and similarly  $(\sum_H e_H) e_{H_0}=(\psi_Z-\sum_{H\supset Z} \la_H f_H) e_{H_0}$
hence 
$$
[e_{H_0},\psi_Z - \sum_H e_H]=0
$$
and this proves the claim.
 
\end{proof}

Note that the quotient of the $Br^0(W,\mathcal{L})$ by the relations $e_H = 0$ provides the semidirect product $\kk W \ltimes \kk \mathcal{L}$, while the quotient by the relations $f_H = 1$
provides $Br^0(W)$. We can similarly
introduce the algebras $Br^{\underline{Q}}(W,\mathcal{L})$ and in particular $Br(W,\mathcal{L})$ by imposing
the relations $e_H^2 = \delta e_H$ for $H \in \mathcal{A}$.
By a straightforward adaptation of its proof, one gets the following analog of Proposition \ref{prop:findimchen}.

\begin{proposition} \label{prop:findimextchen}
Let $\kk$ be a commutative ring with $1$,
 $\underline{Q}=(Q_H)_{H \in \mathcal{A}}$ a family of polynomials in one indeterminate over $\kk$
 such that $H_1 \sim H_2 \Rightarrow Q_{H_1}=Q_{H_2}$. Then the quotient 
 $Br^{\underline{Q}}(W,\mathcal{L})$ of $Br^0(W,\mathcal{L})$ by the relations $e_HQ_H(e_H)=0$ has finite rank over $\kk$.
\end{proposition}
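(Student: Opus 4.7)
The plan is to adapt almost verbatim the normal form argument of Proposition \ref{prop:findimchen}, the new ingredient being that we must also collect and bound the contribution from the $f_L$ generators. First I would show that every element of $Br^0(W,\mathcal{L})$ is a linear combination of monomials of the form
$$ w \cdot f_L \cdot e_{H_1} \cdots e_{H_r}, \qquad w\in W,\ L\in \mathcal{L},\ H_i\in \mathcal{A}. $$
This uses only the presentation: the semidirect-type relations ($we_H = e_{w(H)}w$, and the analogous $wf_L = f_{w(L)}w$ inherited from the $\kk W \ltimes \kk\mathcal{L}$-structure recalled before the statement) let us push all $W$-factors to the left; the commutation $e_H f_L = f_L e_H$ lets us push all $f_L$'s past the $e_{H_i}$'s; and the lattice relation $f_{L_1}f_{L_2}=f_{L_1\vee L_2}$ collapses any product of $f$'s into a single basis element. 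Since $|W|$ and $|\mathcal{L}|$ are finite, it remains to bound the length $r$ of the $e$-part.

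For this, I would copy the argument in Proposition \ref{prop:findimchen} line for line, checking only that the modified product relation
$$ e_{H_1} e_{H_2} = \Bigl(\sum_{s(H_2)=H_1} \mu_s\, s\, f_{H_s}\Bigr)\, e_{H_2} $$
still strictly decreases the number of $e$-letters when $H_1 \neq H_2$ are not transverse. It does: the right-hand side involves a single $e$-factor, while the extra $s$ and $f_{H_s}$ produced can be absorbed into the normal form (push $s$ leftward past the remaining $e$'s via $s e_H = e_{s(H)} s$, and push $f_{H_s}$ leftward past the remaining $e$'s by the commutation, merging it with the existing $f_L$ via the lattice relation). Assuming $r$ is chosen minimally among all such expressions and exceeds $m\cdot\rk(W)$ with $m = \max_H \deg Q_H$, one concludes as before: consecutive $H_i,H_{i+1}$ must be equal or transverse; grouping equal hyperplanes and applying the polynomial relation $e_H Q_H(e_H)=0$ gives multiplicities $\le m$; and the distinct hyperplanes appearing form a transverse collection, hence yield mutually orthogonal normal vectors, bounding their number by $\rk(W)$. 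This contradicts $r > m\cdot\rk(W)$.

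The only point that requires a small verification beyond the original proof is that pulling the newly created $f_{H_s}$ to the left (together with the pre-existing $f_L$) does not interfere with the minimality selection on $r$: since the rewriting steps used in that selection involve only the $e$-alphabet and the commutations of $e$'s with $w$'s and with $f$'s, and none of these can create new $e$'s, the length function $r$ is well-defined on the set of rewritings and the induction on minimal $r$ goes through unchanged. I therefore expect no real obstacle: the bound
$$ \dim_{\kk} Br^{\underline{Q}}(W,\mathcal{L}) \le |W|\cdot |\mathcal{L}|\cdot |\mathcal{A}|^{m\,\rk(W)} $$
is finite, which is the claim.
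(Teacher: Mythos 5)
Your proposal is correct and is precisely what the paper intends: the paper proves this proposition only by asserting that it follows ``by a straightforward adaptation'' of the proof of Proposition \ref{prop:findimchen}, and you carry out exactly that adaptation (normal form $w\,f_L\,e_{H_1}\cdots e_{H_r}$ using the semidirect and commutation relations, then the same minimality/transversality argument to bound $r$ by $m\,\rk(W)$), correctly identifying the only new point, namely that the extra factors $s\,f_{H_s}$ produced by the modified product relation can be absorbed on the left without creating new $e$'s. The only blemish is the final numerical bound, which should be $|W|\cdot|\mathcal{L}|\cdot\sum_{r=0}^{m\,\rk(W)}|\mathcal{A}|^r$ rather than $|W|\cdot|\mathcal{L}|\cdot|\mathcal{A}|^{m\,\rk(W)}$, but this does not affect the finiteness claim.
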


The existence of the flat connection of Theorem \ref{theo:flat} raises the
following question :

\begin{question}
When $W = \mathfrak{S}_n$ and $\mathcal{L}$ is the lattice of all reflection subgroups of $W$, does this monodromy representation of the braid group over $Br(W,\mathcal{L})$ provide the braid group representations factoring through the tied-BMW algebra of Aicardi and Juyumaya (see \cite{AICJUYU}) ?
\end{question}

\section{The ideals $I_r$}

 To any transverse collection
$\underline{H} = \{ H_1,\dots, H_r \}$ we associate $e_{\underline{H}} = e_{H_1}\dots e_{H_r} \in Br(W)$. We denote $I_r$ for $r \geq 1$ the left ideal of $Br(W)$ generated by the $e_{\underline{H}}$ for $\underline{H}$ of cardinality $r$ (or equivalently, at least $r$).
We first prove

\begin{lemma}
Each $I_r$ is a two-sided ideal of $Br(W)$.
\end{lemma}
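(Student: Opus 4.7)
The plan is as follows. Since $I_r$ is a left ideal by construction, to prove it is two-sided it suffices to check that $I_r \cdot g \subset I_r$ for every algebra generator $g$ of $Br(W)$, that is for $g = w \in W$ and for $g = e_H$, $H \in \mathcal{A}$. Because $I_r$ is generated (as a left ideal) by the elements $e_{\underline{H}}$ for $\underline{H}$ a transverse collection of cardinality $r$, I only need to check $e_{\underline{H}} g \in I_r$ on these generators.

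The case $g = w \in W$ is disposed of by the semi-direct relations: writing $e_{\underline{H}} w = w \cdot (w^{-1} e_{\underline{H}} w) = w \cdot e_{w^{-1}(\underline{H})}$, and observing that $w^{-1}(\underline{H}) = \{w^{-1}(H_1),\dots,w^{-1}(H_r)\}$ is again a transverse collection of size $r$, this product lies in $I_r$.

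For $g = e_H$, I will split into three cases, using all along that by relation~(2) the factors $e_{H_i}$ in a transverse collection commute, so $e_{\underline{H}}$ may be reordered at will. First, if $H$ is distinct from and transverse to every $H_i$, then $\underline{H} \cup \{H\}$ is transverse of size $r+1$, so $e_{\underline{H}} e_H = e_{\underline{H}\cup\{H\}} \in I_{r+1} \subset I_r$. Second, if $H = H_i$ for some $i$, I commute the extra $e_H$ past $e_{H_{i+1}},\dots,e_{H_r}$ (all transverse to $H_i$), apply relation~(1) in the form $e_{H_i}^2 = \delta e_{H_i}$, and get $\delta\, e_{\underline{H}} \in I_r$. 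Third, if $H$ is distinct from every $H_i$ but fails to be transverse to some $H_i$, I reorder so that $e_{H_i}$ is the rightmost factor of $e_{\underline{H}}$, then apply the right-hand form of relation~(3), namely $e_{H_i} e_H = e_{H_i}\bigl(\sum_{s(H) = H_i} \mu_s s\bigr)$, to obtain $e_{\underline{H}} e_H = e_{\underline{H}} \cdot \sum_{s(H) = H_i} \mu_s s$. The right-hand side is then in $I_r$ by the already handled case $g \in W$.

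The only delicate step is the third case, and it is essentially a bookkeeping issue: one must use the right-hand equality in relation~(3) rather than the left-hand one, so that the entire $e_{\underline{H}}$ remains as a prefix and the remaining factor is a group element, reducing the argument to the previously treated $W$-case. No further ingredient is required; the statement follows purely from the defining relations combined with the pairwise commutation of the $e_{H_j}$ inside a transverse collection.
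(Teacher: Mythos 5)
Your proof is correct and follows essentially the same route as the paper's: reduce to the generators $e_{\underline{H}}$, handle right multiplication by $w\in W$ via the semi-direct relations and the $W$-invariance of transversality, and split right multiplication by $e_H$ into the same three cases (transverse, repeated, non-transverse), with the last case reduced to the group case via the right-hand form of relation~(3). The only cosmetic difference is that in the transverse case you invoke $I_{r+1}\subset I_r$ where the paper instead rewrites $e_{\underline{H}\cup\{K\}}=e_K e_{\underline{H}}$ to see it directly as an element of the left ideal $I_r$; both are valid.
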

\begin{proof}
Let $\underline{H}$ be a transverse collection of
cardinality $r$. One needs to prove that $e_{\underline{H}} w \in I_r$ and
$e_{\underline{H}} e_{K} \in I_r$ for all $w \in W$ and $K \in \mathcal{A}$. One readily checks that $e_{\underline{H}} w = e_{w^{-1}(\underline{H})}$ and that $w^{-1}(\underline{H})$
is again a transverse collection, so this proves 
$e_{\underline{H}} w \in I_r$
and more generally $e_{\underline{H}}\Q[\delta] w \subset I_r$. We now consider
$e_{\underline{H}} e_{K}$. If $K \in \underline{H}$ then clearly $e_{\underline{H}} e_{K} = \delta e_{\underline{H}} \in I_r$.  Otherwise, if $K$ is transverse to every hyperplane in $\underline{H}$ we have
$e_{\underline{H}} e_{K}= e_{\underline{H} \cup \{ K \}} = e_K e_{\underline{H}} \in I_r$. If not,
then $\underline{H} = \underline{J} \cup \{ L \}$
with $L$ not transverse to $K$. But then
$e_L e_K \in e_L.\Q[\delta] W$ hence
$e_{\underline{H}} e_K \subset e_{\underline{H}}\Q[\delta] W \subset I_r$ as we already proved. This proves the claim.
\end{proof}

We denote $Br_r(W) = Br(W)/I_{r+1}$, so
in particular $Br_0(W) = \Q[\delta ] W$.

For any transverse collection $\underline{H}$,
let $W_{\underline{H}}$ denote the subgroup generated by all the reflections w.r.t. a hyperplane in $\underline{H}$.
For $w \in W_{\underline{H}}$, we have
$w e_{\underline{H}} = e_{\underline{H}}$,
hence for arbitrary $w \in W$ the element
$w e_{\underline{H}}$ depends only on the class
of $w$ modulo $W_{\underline{H}}$. Therefore,
$c e_{\underline{H}}$ is well-defined for $c \in W/W_{\underline{H}}$.

\begin{proposition} \label{prop:spannBr}
The collection of the $c W_{\underline{H}}$ for
$\underline{H}$ a (possibly empty) transverse collection of hyperplanes and $c \in W/W_{\underline{H}}$ is a spanning set for $Br(W)$.
\end{proposition}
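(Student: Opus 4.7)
The plan is to follow the reduction used in the proof of Proposition \ref{prop:findimchen}. First, using the semi-direct product relations $w e_H = e_{w(H)} w$, every element of $Br(W)$ is a $\kk$-linear combination of monomials $w e_{H_1} \dots e_{H_r}$ with $w \in W$ and $H_1,\dots,H_r \in \mathcal{A}$; the empty case $r = 0$ simply yields the elements of $W$, corresponding to the empty transverse collection.

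The crux is to rewrite each such monomial as a combination of $c e_{\underline{H}}$ with $\underline{H}$ a transverse collection. I would argue by induction on $r$ (equivalently, by minimal counterexample). If some consecutive pair has $H_i = H_{i+1}$, relation~(1) produces a scalar multiple of a monomial of length $r-1$. If some consecutive pair is distinct and not transverse, relation~(3) rewrites $e_{H_i}e_{H_{i+1}}$ as a sum of terms $\mu_s s\, e_{H_{i+1}}$, and pushing the $s$'s past the remaining $e_{H_j}$'s via the semi-direct relations again yields monomials of length $r-1$. In a minimal counterexample, therefore, every consecutive pair is transverse; combined with the commutation relation~(2), this allows swapping adjacent factors without changing the element, and reapplying the dichotomy after each swap forces every pair $(H_i,H_j)$ to be transverse. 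Hence $\underline{H}=\{H_1,\dots,H_r\}$ is a genuine transverse collection of cardinality $r$, and by relation~(2) the product $e_{H_1}\dots e_{H_r}$ depends only on the set $\underline{H}$, so it equals $e_{\underline{H}}$.

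Invoking the observation recorded just before the proposition, that $w'e_{\underline{H}} = e_{\underline{H}}$ whenever $w' \in W_{\underline{H}}$, the resulting monomial $w e_{\underline{H}}$ depends only on the coset $c = w W_{\underline{H}} \in W/W_{\underline{H}}$, completing the proof. If one wishes a clean proof of this observation, it suffices to check it on generators: for a reflection $s$ with $\Ker(s-1) = H_i \in \underline{H}$, transversality gives $s(H_j) = H_j$ for $j \neq i$ (since $s$ fixes the codimension~2 flat $H_i \cap H_j$ pointwise, so $s(H_j)$ contains it and must equal $H_i$ or $H_j$, and $s(H_j) = H_i$ is ruled out as $s$ stabilizes $H_i$); thus $s$ commutes with every $e_{H_j}$ for $j \neq i$ and can be moved next to $e_{H_i}$, where relation~(1) absorbs it.

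The main potential obstacle is the propagation step, namely deducing from "every consecutive pair is transverse" that "every pair is transverse". This requires an auxiliary inner induction nested inside the main minimal-counterexample argument, applying the dichotomy afresh after each transposition of adjacent commuting factors; care is also needed to ensure that all hyperplanes in the resulting collection are distinct, but any repetition would again be eliminated by bringing two equal $e_H$'s adjacent (via transversality swaps) and invoking relation~(1).
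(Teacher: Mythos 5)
Your proof is correct, and it reaches the conclusion by a genuinely different organization of the same basic manipulations. The paper argues that the span of the elements $c\,e_{\underline{H}}$ contains $1$ (empty collection) and is stable under left multiplication by the generators $g\in W$ and $e_K$, $K\in\mathcal{A}$; consequently a single $e_K$ is only ever multiplied into an \emph{already transverse} collection $e_{\underline{H}}$, and the trichotomy ($K$ transverse to all of $\underline{H}$, $K\in\underline{H}$, or $K$ not transverse to some $L\in\underline{H}$) settles each case in one line via relations (1), (2), (3). Your version instead normalizes an arbitrary monomial $w e_{H_1}\cdots e_{H_r}$ by a minimal-counterexample induction on $r$, modelled on the proof of Proposition \ref{prop:findimchen}, and therefore must carry out the propagation step you rightly flag as the delicate point: upgrading ``every consecutive pair is transverse'' to ``every pair is transverse and the $H_i$ are distinct'' by repeatedly transposing commuting adjacent factors and reapplying the dichotomy. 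You handle that nested induction correctly (it is exactly the argument already used in Proposition \ref{prop:findimchen}), but the paper's left-ideal formulation avoids it altogether, which is what it buys. Your explicit check that $w'e_{\underline{H}}=e_{\underline{H}}$ for $w'\in W_{\underline{H}}$ is a welcome addition: the paper asserts this just before the proposition without proof, and your argument --- a reflection $s$ with $\Ker(s-1)=H_i\in\underline{H}$ fixes each $H_j$ transverse to $H_i$ (since $s(H_j)\supset H_i\cap H_j$ forces $s(H_j)\in\{H_i,H_j\}$ and $s(H_j)=H_i=s(H_i)$ would contradict injectivity), hence commutes with $e_{H_j}$ and is absorbed by $e_{H_i}$ via relation (1) --- is the right one.
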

\begin{proof}
By the above remark it is equivalent to say that the collection of the $w e_{\underline{H}}$ for
$w \in W$ and $\underline{H}$ a transverse collection form a spanning set.
For this we need to prove that multiplying on the
left such elements by $g \in W$ and $e_K$ for $K \in \mathcal{A}$ can be written as a linear combination of such elements. Since this is clear
for $g \in W$, we look at $e_K w e_{\underline{H}} = w e_{w^{-1}(K)} e_{\underline{H}}$ and we finally need to prove that $e_J e_{\underline{H}}$ is equal to such a linear combination. The proof is then similar to the one of the previous lemma : if $K$ is transverse to all hyperplanes inside $\underline{H}$
then $e_K e_{\underline{H}} =  e_{\underline{H} \cup \{ K \}}$ and we are done, the case $K \in \mathcal{H}$ is also clear, and 
otherwise we can write $\underline{H} = \{ L \} \cup \underline{J}$ with $L$ not transverse to $K$ and 
$$e_K e_{\underline{H}} = e_K e_L e_{\underline{J}} \in \Q[\delta ]W e_L e_{\underline{J}}\subset \Q[\delta ] W e_{\underline{H}}$$
and we are done.
\end{proof}

As a corollary, when $W$ is a 2-reflection group admitting a single conjugacy class of reflections, then $Br_1(W)$ is spanned by a family of cardinality
$$
|W| + |\mathcal{R}|\times |W|/2
$$

Now consider the case of a Coxeter group with generating set $S$. One could consider the
ideal $J_2$ generated by the $e_r e_s$ for $r,s \in S$ being non-adjacent nodes of the Coxeter diagram. It is clear that $J_2 \subset I_2$. Conversely, if $\underline{H}= (H_1,H_2)$ is a transverse collection, the parabolic subgroup $W_Z$ fixing $Z = H_1 \cap H_2$ is generated by $s_{H_1}$ and $s_{H_2}$. It is conjugate to a standard parabolic $\langle r_1,r_2 \rangle$
with $r_1,r_2 \in S$, $s_{H_i} = w r_i w^{-1}$
for $i = \{1,2\}$ and some $w \in W$. Since $\Ker(r_1-1)$ and $\Ker(r_2 - 1)$ are also transverse we have that $r_1$ and $r_2$ are non-adjacent in the Coxeter graph hence $e_{r_1} e_{r_2} \in J_2$. But then $e_{H_1}e_{H_2 } = w e_{r_1}e_{r_2}w^{-1} \in J_2$ hence $I_2 \subset J_2$ and this proves that $I_2=J_2$. This provides a sometimes more handy description of $Br_1(W)$ in the Coxeter case.

\section{Connections with the Cohen-Frenk-Wales algebra}

In \cite{CFW}, Cohen, Frenk and Wales associated a Brauer algebra to any Coxeter group of type ADE as follows. If $(W,S)$ is such a Coxeter system, it is defined by generators $r \in S$, $e_r, r \in S$
and the Coxeter relations on $S$ together with
the following ones, where the notation $r \sim s$ means that $r$ and
$s$ are connected in the Coxeter diagram :

\begin{itemize}
\item (RSre) $r e_r = e_r$
\item (RSer) $ e_rr = e_r$
\item (HSee) $e_r^2 = \delta e_r$
\item (HCee) if $r \not\sim s$ then $e_r e_s = e_s e_r$
\item (HCer) if $r \not\sim s$ then $e_r s = s e_r$
\item (RNrre) if $r \sim s$ then $rse_r = e_se_r$
\item (HNrer) if $r \sim s$ then $se_rs = re_sr$ 
\end{itemize}

We denote this algebra by $BrCFW(W)$.
There is a natural injective map $W \to BrCFW(W)$ mapping $S$ to $S$ identically.

For every $t \in \mathcal{R}$ there is $s \in S$ and $w \in W$ such that $t = w s w^{-1}$.

We replace (RNrre) and (HNrer) by the following two conditions
\begin{itemize}
\item (RNrre') if $r \sim s$ then $rsre_s = e_r e_s$
\item (HNrer') if $r \sim s$ then $rsre_s = e_r srs$ 
\end{itemize}
Under the other conditions, it is readily checked that (RNrre) is equivalent to (RNrre') and that (HNrer) is equivalent to (HNrer'). From this it can be shown (see \cite{CHENBR}) that this algebra is isomorphic to the Brauer-Chen algebra $Br(W)$.

The irreducible representations on which $I_2$ vanishes
which are described in \cite{CFW} are indexed by a
$W$-orbit of positive roots -- which can be
identified to a $W$-orbit of reflecting hyperplanes -- together with an irreducible character of a subgroup (called $W(C)$ in \cite{CFW}) of $W$, which is shown (\cite{CFW}, Proposition 4.7) to be a complement of the parabolic subgroup $W_0$ fixing a given hyperplane $H_0$ inside the normalizer $N_W(W_0) = \{ w \in W \ | \ w(H_0)=H_0 \}$ of $W_0$, and is therefore isomorphic to $N(W_0)/W_0$. This complement is described as the reflection subgroup generated by the reflections associated to the roots orthogonal to the highest one in the $W$-orbit under consideration. The construction of the
representations is based on the root system. We show in the next section that these constructions can be made and generalized in a way independent of the chosen root system
to arbitrary complex reflection groups.

\section{Representations of $Br_1(W)$}

In this section we denote $\bkappa$ a field of characteristic $0$, and we assume $\kk = \bkappa(\delta)$
is the field of rational functions in $\delta$.

We will need the following easy lemma, for which we could not find a convenient reference.

\begin{lemma} \label{lem:descent} 
The map $M \mapsto M \otimes_{\bkappa} \kk$ induces a bijection between isomorphism classes of $\bkappa G$-modules and $\kk G$-modules.
\end{lemma}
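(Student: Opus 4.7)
The plan is to reduce both directions to a statement about characters. For the application at hand $G$ is a finite group, so Maschke's theorem in characteristic zero makes both $\bkappa G$ and $\kk G$ semisimple, and thus finite-dimensional modules over either algebra are classified up to isomorphism by their characters.

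The crucial input I would use is that $\bkappa$ is algebraically closed inside $\kk = \bkappa(\delta)$: any element of $\kk$ algebraic over $\bkappa$ already lies in $\bkappa$. Since the character values of a finite-dimensional representation of a finite group are sums of roots of unity, and hence algebraic over $\Q \subset \bkappa$, the character of any finite-dimensional $\kk G$-module is automatically $\bkappa$-valued, in particular independent of $\delta$. Once this is established, injectivity is immediate: an isomorphism $M \otimes_\bkappa \kk \cong N \otimes_\bkappa \kk$ yields $\chi_M = \chi_N$ as $\bkappa$-valued functions, whence $M \cong N$.

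For surjectivity I would start from a finite-dimensional $\kk G$-module $V$, fix a basis, and realise the $G$-action as a homomorphism $\rho\colon G \to \GL_n(\bkappa(\delta))$. Since $G$ is finite, only finitely many rational functions in $\delta$ appear as entries of the matrices $\rho(g)$, $g \in G$; let $D(\delta) \in \bkappa[\delta]$ be a common denominator for all these entries. Because $\bkappa$ is infinite, one can pick $\delta_0 \in \bkappa$ with $D(\delta_0) \neq 0$ and specialize $\delta \mapsto \delta_0$. The group relations $\rho(g_1)\rho(g_2) = \rho(g_1 g_2)$ are polynomial identities in the entries and therefore survive specialization; in particular, applying this to $g_2 = g_1^{-1}$ automatically forces $\rho(g)|_{\delta_0}$ to be invertible. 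This yields a $\bkappa G$-module $M$ of dimension $n$ with $\chi_M(g) = \chi_V(g)|_{\delta_0} = \chi_V(g)$ (the last equality by the previous paragraph); the character-determines-module principle then gives $M \otimes_\bkappa \kk \cong V$.

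There is no deep obstacle here, only some bookkeeping: the common denominator $D(\delta)$ must be chosen to cover the entries of $\rho(g)$ for all $g \in G$ at once (which, $G$ being closed under inversion, handles the inverse matrices too), and one invokes infiniteness of $\bkappa$ to find the specialization point. The whole argument rests on two standard facts — characters determine semisimple representations in characteristic zero, and $\bkappa$ is algebraically closed in $\bkappa(\delta)$ — together with the one-step specialization above.
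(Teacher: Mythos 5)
Your argument is correct and follows essentially the same route as the paper: specialize the matrix entries at a point $\delta_0\in\bkappa$ where the denominators do not vanish, and use that elements of $\bkappa(\delta)$ algebraic over $\bkappa$ lie in $\bkappa$ to conclude that characters are $\bkappa$-valued and hence preserved by specialization. The only (immaterial) differences are that the paper gets injectivity from the general fact that $M_1\simeq M_2\Leftrightarrow M_1\otimes_{\bkappa}\kk\simeq M_2\otimes_{\bkappa}\kk$ rather than from characters, and ensures invertibility of the specialized matrices by also avoiding the zeros of $\det\rho(g)$ instead of appealing to the surviving relation $\rho(g)\rho(g^{-1})=\mathrm{Id}$.
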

\begin{proof} Recall (from e.g. prop. 1 of \cite{LIE456}, ch. V, annexe) the very general fact
that $M_1 \simeq M_2 \Leftrightarrow M_1 \otimes_{\bkappa} \kk
\simeq M_2 \otimes_{\bkappa} \kk$, so this map is injective. Since both ${\bkappa} G$ and $\kk G$ are semisimple algebras over their
base field, it is then sufficient to prove that every irreducible representation $\rho : G \to \GL_n(\kk)$ is isomorphic to the extension $\tilde{\rho}_0$ to $\kk$ of
a representation $\rho_0 : G \to \GL_n({\bkappa})$. Since $G$ is finite and ${\bkappa}$ is infinite there exists $\delta_0 \in {\bkappa}$ such that all entries of the $\rho(g)$, $g \in G$, viewed as rational functions, can be specialized at $\delta_0$ and such that $\delta_0$ is not a root of the $\det \rho(g)$, $g \in G$. Denote $\rho_0 : G \to \GL_n(\bkappa)$ the corresponding specialization. Now notice that the values $\tr \rho(g) \in \kk$ are algebraic over $\Q \subset {\bkappa}$. But all the elements of $\kk = {\bkappa}(\delta)$ which are algebraic over
${\bkappa}$ actually belong to ${\bkappa}$, hence $\tr \rho(g) \in \bkappa$ for all $g \in G$. It follows that the character of $\rho$ is equal to the character of $\tilde{\rho}_0$ hence $\rho \simeq  \tilde{\rho}_0$ and this proves the claim.
\end{proof}

\subsection{A direct presentation for $Br_1(W)$}

The algebra $Br_1(W)=Br(W)/I_2$ admits a
more tractable presentation with the same generators, and for relations the relations of $W$, the semidirect type relations, and the following ones :
\begin{itemize}
\item $(1)$\ $\forall H \in \mathcal{A} \ \  e_H^2 = \delta  e_H \ \ \& \ \ e_Hs = s e_H = e_H$ whenever $\Ker(s-1)=H$
\item $(3_1)$ If $H_1$ and $H_2$ are distinct, then
$$
e_{H_1} e_{H_2} = \left(\sum_{s \in \mathcal{R} \ | \ s(H_2)=H_1 } \mu_s s \right)  e_{H_2}=e_{H_1} \left(\sum_{s \in \mathcal{R} \ | \ s(H_2)=H_1 } \mu_s s \right)
$$
\end{itemize}
In other terms, the defining relations $(2)$ and $(3)$
of $Br(W)$ together with the defining relations of $I_2$ are replaced by $(3_1)$.
 
Indeed, relation $(2)$ modulo $I_2$ means $H_1 \captr H_2 \Rightarrow e_{H_1} e_{H_2}=0$, and this
is equivalent to asserting $(3_1)$ for $H_1$ and $H_2$, since
there are no reflection mapping $H_1$ to $H_2$ when $H_1$ and $H_2$ are transverse (see \cite{KRAMCRG}, Lemma 3.1).

\subsection{The $Br_1(W)$-modules of the form $\tilde{M}$ : definition.}

Let $H_0 \in \mathcal{A}$, $W_0 = W_{\underline{H}_0} = \langle s_0 \rangle$ be the pointwise stabilizer of $H_0$. We set $$
N_0 = \{ w \in W ; w(H_0) = H_0 \} = N_W(W_0)
$$
the normalizer of $W_0$ in $W$.
Let $M$ be a $\bkappa N_0$-module on which $W_0$ acts trivially. Then $\tilde{M} =  \kk W  \otimes_{\kk N_0} M_{\kk}$ with $M_{\kk}=\kk \otimes_{\bkappa} M$ is a $\kk W$-module which is the induced representation of $M_{\kk}$. 
Let $\mathcal{A}_0=W.H_0 \subset \mathcal{A}$ denote the orbit of $H_0$ under $W$. For all $H \in \mathcal{A}_0$ we pick $g_H \in W$ such that $g_H(H_0)=H$.
The set $\mathcal{G}_0$ of all such $g_H$ is
a set of representatives of $W/N_0$, hence $\tilde{M}$
admits a direct sum decomposition
$$
\tilde{M} = \bigoplus_{H \in \mathcal{A}_0} V_H $$
with $V_H = g_H \otimes M_{\kk}$ such that $w V_H \subset V_{w(H)}$ for all $w \in W$. Notice that $V_0= V_{H_0}$ is canonically identified to $M$ as an $N_0$-module, under
the natural inclusion $N_0 \subset W$.

We define $p_0 = p_{H_0} \in \End(\tilde{M})$ by
$$
p_{H_0} (x) = \sum_{u(H)=H_0} \mu_u u.x 
$$ 
where the $u$'s are understood to be reflections (and the $\mu_u$'s are the defining parameters of the algebra associated to them), and   $x \in V_H$, except when $H = H_0$ in which
case $p_{H_0} (x) = \delta x$.

\begin{lemma} \label{lem:p0tildeM} \ 
\begin{enumerate}
\item For all $w \in N_0$ and $x \in \tilde{M}$ we have $wp_0.x = p_0w.x$
\item $p_0(\tilde{M}) \subset V_{H_0}$
\item $p_0^2 = \delta p_0$ 
\end{enumerate}
\end{lemma}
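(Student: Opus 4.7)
The plan is to exploit the $W$-equivariant decomposition $\tilde{M} = \bigoplus_{H \in \mathcal{A}_0} V_H$, where $w.V_H \subseteq V_{w(H)}$, and to treat each of the three assertions by inspecting homogeneous pieces. Throughout, the key split will be between the summand $V_{H_0}$, on which $p_0$ acts tautologically as multiplication by $\delta$, and the other summands $V_H$ with $H \neq H_0$, on which $p_0$ is defined by a sum over reflections mapping $H$ to $H_0$.

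Part (2) is essentially the definition. On $V_{H_0}$, $p_0$ is $\delta \cdot \Id$, so the image lies in $V_{H_0}$ trivially. For $x \in V_H$ with $H \neq H_0$, each term $\mu_u u.x$ in the sum has $u$ a reflection with $u(H) = H_0$, hence $u.x \in V_{u(H)} = V_{H_0}$, and the sum remains in $V_{H_0}$.

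For part (1), I would fix $w \in N_0$ and $x \in V_H$ and compare $w p_0.x$ with $p_0 w.x$, case-splitting on whether $H = H_0$. If $H = H_0$, then $w.x \in V_{H_0}$ (since $w \in N_0$), and both sides equal $\delta w.x$. If $H \neq H_0$, then $w(H) \neq H_0$ as well (since $w$ fixes $H_0$), so both $p_0$'s are given by sums over reflections. The main ingredient is that conjugation $u \mapsto wuw^{-1}$ gives a bijection between $\{u \in \mathcal{R} : u(H) = H_0\}$ and $\{v \in \mathcal{R} : v(w(H)) = H_0\}$, which preserves the parameters since $\mu_{wuw^{-1}} = \mu_u$ by $W$-invariance of $\underline{\mu}$. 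A short computation
$$
w p_0.x = \sum_{u(H)=H_0} \mu_u (wu).x = \sum_{u(H)=H_0} \mu_u (wuw^{-1}) w.x = \sum_{v(w(H))=H_0} \mu_v v.(w.x) = p_0 w.x
$$
then closes the case. Part (3) follows immediately: by (2), $p_0(x) \in V_{H_0}$ for any $x \in \tilde{M}$, and $p_0$ acts as $\delta$ on $V_{H_0}$, so $p_0^2(x) = \delta p_0(x)$.

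No real obstacle is expected. The only slightly delicate point is the bookkeeping in (1): one has to notice that the hypothesis $w \in N_0$ is precisely what makes the bijection $u \mapsto wuw^{-1}$ well-defined (so that the two index sets match) and that the equality of coefficients follows from the standing assumption that $\mu_s$ depends only on the conjugacy class of $s$.
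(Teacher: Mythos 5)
Your proof is correct and follows essentially the same route as the paper: part (1) by the conjugation bijection $u \mapsto wuw^{-1}$ on the set of reflections sending $H$ to $H_0$ together with the conjugacy-invariance of $\underline{\mu}$, part (2) from $u.V_H \subseteq V_{u(H)} = V_{H_0}$, and part (3) as an immediate consequence of (2). The only difference is cosmetic (you prove (2) before (1) and invoke the stated property $wV_H \subseteq V_{w(H)}$ rather than unwinding $g_H \otimes y$ explicitly), so nothing further is needed.
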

\begin{proof}
We first prove (1). For $x \in V_{H_0}$ this is immediate. For $H \in \mathcal{A}\setminus \{ H_0 \}$ we have, for $w \in N_0$ and $y \in M$,
$$
wp_0.(g_H \otimes y) = w  \sum_{u(H)=H_0} \mu_u u.(g_H \otimes y)
=   \sum_{u(H)=H_0}\mu_u wug_H \otimes y
$$
while
$$
p_0w.g_H \otimes y =  p_0 .(wg_H)\otimes y= \sum_{vw(H)=H_0} \mu_v(vwg_H)\otimes y=
\sum_{wv^w(H)=H_0} \mu_v(wv^wg_H)\otimes y
$${}$$=
\sum_{v^w(H)=H_0} \mu_{v^w} wv^wg_H \otimes y
$$
and this proves the claim, as $v \mapsto v^w$ is a bijection of $\mathcal{R}$.
We now prove (2). For all $u$ with $u(H)=H_0$ we
have $u.(g_H \otimes y) = u g_H \otimes y$
and $ug_H(H_0) = u(H)=H_0$ hence $ug_H \in \C N_0$ and this proves the claim. (3) is then an immediate consequence.

\end{proof}

\begin{lemma}
For $w \in W$, and $H = w(H_0)$, then
$p_H \in \End(\tilde{M})$ defined by
$p_H.x= w p_0 w^{-1}.x$ depends only on $H$. If $H \in \mathcal{A}\setminus\mathcal{A}_0$ is not a conjugate of $H_0$ then we set $p_H = 0$. If $H_1,H_2 \in \mathcal{A}$ satisfy $H_2 = w(H_1)$ for some $w \in W$, then $w p_{H_1} w^{-1}.x = p_{H_2}.x$ for all $x \in \tilde{M}$.
\end{lemma}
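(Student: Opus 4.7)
The lemma makes three claims: (a) that for $H \in \mathcal{A}_0$, the endomorphism $p_H = w p_0 w^{-1}$ does not depend on the choice of $w$ with $w(H_0) = H$; (b) the ad hoc definition $p_H = 0$ when $H \notin \mathcal{A}_0$; and (c) the $W$-equivariance $wp_{H_1}w^{-1} = p_{H_2}$ whenever $H_2 = w(H_1)$. The plan is to reduce (a) to the commutation property $p_0 n = n p_0$ for $n \in N_0$, established in Lemma \ref{lem:p0tildeM}(1), and then to deduce (c) formally from (a) by a one-line cocycle manipulation; (b) has nothing to prove beyond consistency, which amounts to the observation that $\mathcal{A}_0$ is a $W$-orbit and is therefore $W$-stable.

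For (a), suppose $w_1(H_0) = w_2(H_0) = H$. Then $n := w_2^{-1} w_1$ lies in $N_0$, so by Lemma \ref{lem:p0tildeM}(1), $n p_0 = p_0 n$, i.e.\ $p_0 = n p_0 n^{-1}$. Multiplying by $w_2$ on the left and $w_2^{-1}$ on the right gives $w_2 p_0 w_2^{-1} = w_1 p_0 w_1^{-1}$, which is exactly the well-definedness of $p_H$.

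For (c), I split according to whether $H_1$ lies in $\mathcal{A}_0$ or not. If $H_1 \in \mathcal{A}_0$, pick $u \in W$ with $u(H_0) = H_1$, so that by definition $p_{H_1} = u p_0 u^{-1}$; since $(wu)(H_0) = w(H_1) = H_2$, part (a) gives $p_{H_2} = (wu) p_0 (wu)^{-1} = w (u p_0 u^{-1}) w^{-1} = w p_{H_1} w^{-1}$. If instead $H_1 \notin \mathcal{A}_0$, then $H_2 = w(H_1) \notin \mathcal{A}_0$ either, because $\mathcal{A}_0 = W.H_0$ is a full $W$-orbit; hence $p_{H_1} = p_{H_2} = 0$ and the identity $wp_{H_1}w^{-1} = p_{H_2}$ is trivial.

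The only nontrivial step is (a), and even that reduces immediately to the already-established commutation of $p_0$ with $N_0$, so I do not anticipate any real obstacle; the lemma is essentially a bookkeeping statement ensuring that the operators $p_H$ depend only on the geometric datum $H$ and transform $W$-equivariantly, which is exactly what is needed to view $H \mapsto p_H$ as the correct representation-theoretic analog of the algebra generator $H \mapsto e_H$.
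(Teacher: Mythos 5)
Your proof is correct and follows essentially the same route as the paper: well-definedness of $p_H$ is reduced to the commutation $np_0 = p_0 n$ for $n \in N_0$ from Lemma \ref{lem:p0tildeM}(1), and equivariance then follows formally, with the case $H_1 \notin \mathcal{A}_0$ handled by $W$-stability of the orbit. (Your version is in fact slightly cleaner, since the paper's line ``$wp_0w^{-1}.x = x$'' is evidently a typo for ``$wp_0w^{-1}.x = p_0.x$'', which you state correctly.)
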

\begin{proof}
Assume that $w_1,w_2 \in W$ satisfy $w_1(H_0)=w_2(H_0)$. Then $w = w_2^{-1} w_1 \in N_0$ hence $w p_0 w^{-1}.x = x$ for all $x \in \tilde{M}$ by the previous lemma (1)
and this implies $w_1 p_0 w_1^{-1}.x = w_2 p_0 w_2^{-1}.x  $ for all $x \in \tilde{M}$ and this proves the first part of the lemma. For the second part, if $H_2\not\in\mathcal{A}_0$ then $H_1 \not\in\mathcal{A}_0$ too,
hence $p_{H_1} = p_{H_2} = 0$ satisfy the property. If not, let $w_0  \in W$ such that $H_1 = w_0(H_0)$. Then
$H_2 = w(H_1)=(w w_0)(H_0)$. It follows that, for all $x \in \tilde{M}$, we have
$p_{H_2}.x = (w w_0) p_0(ww_0)^{-1}.x
= w (w_0 p_0w_0^{-1})w^{-1}.x
= w p_{H_1}w^{-1}.x$ and this proves the claim.
\end{proof}

A consequence of the definition is that, for $w(H_0)=H_1$, we have $$
p_{H_1}(\tilde{M}) = w p_{H_0} w^{-1}(\tilde{M})
= w p_{H_0} (\tilde{M})
= w V_{H_0} = V_{w(H_0)}
= V_{H_1}.
$$
Since, on $V_{H_2}$, $p_0$ coincides with the action
of $\sum_{u(H_2)=H_0} \mu_u u$, this implies that
$$
p_0p_{H_2} =  \left(\sum_{u(H_2)=H_0} \mu_u u\right) p_{H_2}
$$
for all $H_2 \in \mathcal{A}$. Then, 
since $H = w(H_0)$, we have
$$
p_{H_1} p_{H_2}=
p_{w(H_0)} p_{H_2}=
wp_{H_0}w^{-1} p_{H_2}=
wp_{H_0} p_{w^{-1}(H_2)}w^{-1}$$
which is equal by the above to
$$
 w \left(\sum_{vw^{-1}(H_2) = H_0} \mu_v vp_{w^{-1}(H_2)}\right)w^{-1}
 =  \sum_{vw^{-1}(H_2) = H_0} \mu_v wvw^{-1}p_{H_2}$${}$$
 =  \sum_{\, ^w v(H_2) = w(H_0)}\mu_{\, ^w v} \, ^w vp_{H_2}
 =  \sum_{\, u(H_2) = H_1} \mu_u up_{H_2}
$$
where as usual $u,v$ are assumed to belong to $\mathcal{R}$,
and this proves
$$
p_{H_1} p_{H_2}=\left(\sum_{u(H_2)=H_1} \mu_u u\right) p_{H_2}
$$
for all $H_1,H_2 \in \mathcal{A}_0$.

We now want to prove that 
$$
p_{H_1} p_{H_2}=p_{H_1} \left(\sum_{u(H_2)=H_1} \mu_u u\right).
$$
For this we note that
$$
p_{H_0} p_{H_1}=\left( \sum_{v(H_1)=H_0} \mu_v v \right) p_{H_1}
=\sum_{v(H_1)=H_0} vp_{H_1}v^{-1} \mu_v v  
=\sum_{v(H_1)=H_0} p_{H_0} \mu_v v  
=p_{H_0} \left( \sum_{v(H_1)=H_0}  \mu_v v\right)  
$$
and we conclude as before.

Finally, we need to check that $p_0 s = s p_0 = p_0$ whenever $\Ker(s-1)=H$. We have $p_0(\tilde{M}) = V_{H_0} = 1 \otimes M$, and $s.(1 \otimes y) = 
s \otimes y = 1 \otimes s.y$. But $W_0$ acts trivially on $M$ by assumption, hence $s p_0 = p_0$. Then $p_0 s = s (s^{-1} p_0 s) = s p_{s^{-1}(H_0)} = sp_0=p_0$, and through $W$-conjugation we get
$p_H s = s p_H = p_s$ for all $H \in \mathcal{A}_0$,
and $\Ker(s-1)=H$, the case $H \not\in \mathcal{A}_0$ being trivial.

This proves that $e_H \mapsto p_H$ extends the $\kk W$-module structure on $\tilde{M}$
to a $Br_1(W)$-module structure.

\subsection{The $Br_1(W)$-modules of the form $\tilde{M}$ : properties.}

\begin{proposition}
Let $M$ be a $\bkappa N_0$-module on which $W_0$ acts trivially. If $M$ is irreducible, then $\tilde{M}$ is an irreducible $Br_1(W)$-module. Moreover, if $M_1$ and $M_2$ are two such irreducible modules,
then $\tilde{M}_1 \simeq \tilde{M}_2$ iff $M_1\simeq M_2$.
\end{proposition}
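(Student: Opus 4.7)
The plan is to treat the two assertions separately, exploiting the intrinsic characterization of $V_{H_0}$ inside $\tilde M$ as the $\delta$-eigenspace of $p_{H_0}$. Indeed, with respect to the decomposition $\tilde M = V_{H_0} \oplus \bigoplus_{H \ne H_0} V_H$, the operator $p_{H_0}$ has block form $\bigl(\begin{smallmatrix}\delta\,\mathrm{Id} & \ast \\ 0 & 0\end{smallmatrix}\bigr)$, so a straightforward computation (using that $\delta$ is invertible) shows its $\delta$-eigenspace is exactly $V_{H_0}$. This is what will tie the $Br_1(W)$-structure back to the underlying $\bkappa N_0$-module $M$.

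For irreducibility, the key auxiliary fact is that $\bigcap_{H \in \mathcal{A}_0} \ker p_H = 0$. I would view the collection $(p_H)_{H \in \mathcal{A}_0}$ as a single $\kk$-linear operator $Q$ on $\tilde M$, whose $(H_1,H_2)$-block $V_{H_2} \to V_{H_1}$ equals $\delta\,\mathrm{Id}$ if $H_1 = H_2$ and $\sum_{u(H_2)=H_1} \mu_u u$ otherwise. Hence $Q = \delta\,\mathrm{Id} + A$ with $A$ independent of $\delta$ (the $\mu_u$ being taken in $\bkappa$, or more generally being of bounded $\delta$-degree), so $\det Q$ is a polynomial in $\delta$ with leading term $\delta^{\dim \tilde M}$, hence nonzero in $\kk$. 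Therefore $\ker Q = 0$.

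Given a nonzero $Br_1(W)$-submodule $N \subset \tilde M$, pick $0 \ne n \in N$; by the above there exists $H_1 \in \mathcal{A}_0$ with $p_{H_1}(n) \ne 0$, yielding a nonzero subspace $p_{H_1}(N) \subset V_{H_1} \cap N$. The stabilizer of $H_1$ in $W$ is a $W$-conjugate of $N_0$, and through $g_{H_1}$ the space $V_{H_1}$ is identified as a module over it with $M_{\kk}$ as an $N_0$-module; Lemma~\ref{lem:descent} applied to the irreducible $\bkappa N_0$-module $M$ then ensures that this action is still irreducible. Thus $V_{H_1} \subset N$, and $W$-translation forces $V_H \subset N$ for every $H \in \mathcal{A}_0$, so $N = \tilde M$.

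For the isomorphism part, any $Br_1(W)$-isomorphism $\tilde M_1 \to \tilde M_2$ must carry the $\delta$-eigenspace of $p_{H_0}$ on the source to that on the target, producing a $\kk N_0$-equivariant isomorphism $M_1 \otimes_{\bkappa} \kk \to M_2 \otimes_{\bkappa} \kk$ which by Lemma~\ref{lem:descent} descends to $M_1 \simeq M_2$ as $\bkappa N_0$-modules. Conversely, any $\bkappa N_0$-isomorphism $\psi : M_1 \to M_2$ extends to the $\kk W$-equivariant induced map $\mathrm{Id}_{\kk W} \otimes_{\kk N_0} (\psi \otimes \mathrm{Id}_{\kk}) : \tilde M_1 \to \tilde M_2$, and this map commutes with each $p_H$ because the defining formulas for the $p_H$ are manifestly functorial in the datum $M$. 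The one subtlety to flag is the nonvanishing of $\det(\delta\,\mathrm{Id} + A)$, which uses in a critical way that $\delta$ is transcendental over $\bkappa$; the remainder is essentially bookkeeping with induced representations.
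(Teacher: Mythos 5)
Your proof is correct, and its overall architecture matches the paper's: realize $\tilde M$ as the induced module $\bigoplus_{H\in\mathcal{A}_0}V_H$, produce a nonzero vector in $N\cap V_{H_1}$ for some $H_1$, invoke irreducibility of $M_{\kk}$ (via Lemma \ref{lem:descent}) together with transitivity of $W$ on $\mathcal{A}_0$, and recover $M$ intrinsically from the $Br_1(W)$-structure through $p_{H_0}$. The one step you organize genuinely differently is the production of a nonzero element of some $N\cap V_{H_1}$: the paper takes a given nonzero $q=\sum_H\la_H g_H\otimes y_H$, selects a component of maximal $\delta$-degree, and checks by hand that applying $e_{H_1}$ for that particular $H_1$ cannot annihilate $q$, since the diagonal contribution $\delta\la_{H_1}y_{H_1}$ strictly dominates the off-diagonal ones in $\delta$-degree. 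You instead prove once and for all that $\bigcap_H\ker p_H=0$ by assembling the $p_H$ into the block operator $Q=\delta\,\mathrm{Id}+A$ with $A$ free of $\delta$, whose determinant is monic of degree $\dim\tilde M$ in $\delta$ and hence nonzero in $\bkappa(\delta)$. Both arguments exploit the genericity of $\delta$ in the same way, and both implicitly require the parameters $\mu_u$ to be $\delta$-free (your parenthetical ``bounded $\delta$-degree'' would not by itself suffice, as $A=-\delta\,\mathrm{Id}$ shows, but this is the intended generic setting and the paper's own degree bookkeeping makes the same assumption); your packaging is arguably cleaner in that it isolates the genericity into a single invertibility statement. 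Two minor further differences: you characterize $V_{H_0}$ as the $\delta$-eigenspace of $p_{H_0}$ where the paper uses its image (these coincide, and both serve the purpose in the isomorphism argument), and you spell out the easy converse direction of the ``iff'' via functoriality of $M\mapsto\tilde M$, which the paper leaves implicit.
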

\begin{proof}
For the irreducibility, we adapt the arguments of \cite{CFW}, Proposition 5.3.
Assume that $M$ is irreducible, and let $U \subset \tilde{M}$ a nonzero $Br_1(W)$-invariant subspace, and $q \in U \setminus \{ 0 \}$.

We have
$q = \sum_H \la_H g_H \otimes y_H$ for some $
y_H \in M_{\kk}$ and a collection of $\la_H \in \kk = \bkappa(\delta)$. Each $y_H \in M_{\kk}$ can be written
$y_H = \sum_i \nu_{H,i} m_{H,i}$ for some $m_{H,i} \in M$ and $\nu_{H,i} \in \kk$. Up to chasing denominators, we
can assume $\la_H, \nu_{H,i} \in \bkappa[\delta]$. 
Let us pick $H_1$ and $i_0$ with $\la_{H_1} \nu_{H,i_0}$ being of maximal degree in $\delta$ among the terms $\la_H \nu_{H,i} \in \bkappa[\delta]$ with $y_H \neq 0$. Then
$e_{H_1}.q = \sum_H \sum_i \la'_H \nu'_{H,i} g_{H_1} \otimes m'_{H,i}$ for some $m'_{H,i} \in M$ and $\la'_H, \nu'_{H,i}
\in \bkappa[\delta]$ with $\deg \la'_H \nu'_{H,i} \leq \deg
\la_H \nu_{H,i} \leq \la_{H_1} \nu_{H_1,i}$ whenever $H \neq H_1$. On the other hand, we have $\la'_{H_1} \nu'_{H_1,i}
 = \delta \la_{H_1} \nu_{H_1,i}$ and $m'_{H_1,i} = m_{H_1,i}$ for all $i$. It follows that $e_{H_1}.q$ can be written as
$$
g_{H_1}\otimes\left( \delta\la_{H_1}\left(\sum_i \nu_{H_1,i} m_{H_1,i}\right) + \sum_j \omega_j m''_j \right) = 
g_{H_1}\otimes \left( \delta g_H \otimes y_{H_1} + \sum_j \omega_j m''_j \right)
$$
with the $m''_j \in M$, $\omega_j \in \bkappa[\delta]$ with
degree (strictly) lower than maximal degree of the $\la_{H_1}\nu_{H_1,i}$. Since $y_{H_1} \neq 0$, this implies that $e_{H_1}.q \neq 0$.
On the other hand, $e_{H_1}.q \in U \cap V_{H_1}$ hence $U \cap V_{H_1} \neq \{ 0 \}$.
We can thus assume $q = g_{H_1} \otimes y \in U \cap V_{H_1} \setminus \{ 0 \}$.

Since $M$ is irreducible, by Lemma \ref{lem:descent}
we have that $M_{\kk}$ is irreducible as a $\kk N_0$-module. It follows that $V_{H_1} \subset U$.
Since $W$ acts transitively on the $V_H$, $H \in \mathcal{A}_0$, it
follows that $U$ contains all the $V_H$ hence $U = \tilde{M}$ and
$\tilde{M}$ is an irreducible $Br_1(W)$-module.

Now assume that a given $Br_1(W)$-module $\tilde{M}$ that
is obtained from two simple $\bkappa N_0$-modules with the property that the
action of $W_0$ is trivial. We thus have two a priori distinct direct sum decompositions
$$
\tilde{M} = \bigoplus_{H \in \mathcal{A}_0} V_H^{(1)} 
= \bigoplus_{H \in \mathcal{A}_0} V_H^{(2)}  
$$
with the property that the action of $N_0$ on $V_{H_0}^{(i)}$ is (isomorphic to) $M_i$ for $i \in \{1,2 \}$. But since
$V_{H_0}$ is the image of $p_{H_0}$ by Lemma \ref{lem:p0tildeM} it follows that $(M_1)_{\kk} \simeq (M_2)_{\kk}$ as $\kk N_0$-modules. But this implies that $M_1 \simeq M_2$ as $\bkappa N_0$-modules and this proves the second part of the statement.
\end{proof}

\begin{remark} For the second part of the argument, we are not replicating the argument of \cite{CFW}, Proposition 5.4, because it looks incorrect to us. Indeed, it is claimed there that the restriction of $\tilde{M}$ to $W(C_{\mathcal{B}})$,
which is equivalent to the restriction to our $N_0$,
is isomorphic to $|\mathcal{R}|M$, while it is actually
isomorphic to $\Res_{N_0} \Ind_{N_0}^W M$. Actually, already for $D_4$ one can find non-isomorphic modules $M_1,M_2$ for which the restriction of $\tilde{M}_1$ 
and $\tilde{M}_2$ to $W$ are isomorphic.
\end{remark}

\begin{proposition}
Every irreducible representation of $Br_1(W)$ has the form $\tilde{M}$ for some irreducible $M$.
\end{proposition}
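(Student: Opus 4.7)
The plan is, given an irreducible $Br_1(W)$-module $V$, to recover $M$ by taking the image of the appropriate idempotent. First I would split into two cases: either every $e_H$ acts as zero on $V$, in which case $V$ factors through $Br_0(W) = \kk W$ (so $V$ is one of the ordinary $W$-representations already accounted for in Theorem \ref{thm:structBr1}), or there exists a hyperplane $H_0$ with $M_\kk := e_{H_0} V \neq 0$. I focus on the latter case; by irreducibility one then has $V = Br_1(W) \cdot M_\kk$.

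Next I would equip $M_\kk$ with its natural $\kk N_0$-module structure: the relation $s_{H_0} e_{H_0} = e_{H_0}$ forces $W_0$ to act trivially on $M_\kk$, while $n e_{H_0} = e_{H_0} n$ for $n \in N_0$ makes $M_\kk$ stable under $N_0$. Frobenius reciprocity then yields a $\kk W$-linear surjection
$$
\varphi : \tilde{M}_\kk := \kk W \otimes_{\kk N_0} M_\kk \onto V, \qquad w \otimes m \mapsto w.m,
$$
which one checks is in fact $Br_1(W)$-linear by comparing the definition of the operators $p_H$ on $\tilde{M}_\kk$ with the action of $e_H$ on $V$ prescribed by relations $(1)$ and $(3_1)$.

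The key step, which I expect to be the main obstacle, is showing that $M_\kk$ is irreducible as a $\kk N_0$-module, since then $\tilde{M}_\kk$ will be irreducible by the preceding proposition and $\varphi$ will be forced to be an isomorphism. Given a nonzero $\kk N_0$-submodule $M_\kk' \subset M_\kk$, I would set $V' := \kk W \cdot M_\kk'$ and verify two things. (i) $V'$ is $Br_1(W)$-stable: for $m \in M_\kk'$, writing $m = e_{H_0} v$, the element $e_H \cdot (wm) = w e_{w^{-1}(H)} e_{H_0} v$ reduces via relations $(1)$ and $(3_1)$ to a $\kk$-linear combination of elements $w' \cdot m$ with $w' \in W$, hence lies in $\kk W \cdot M_\kk' = V'$. (ii) $e_{H_0} V' = M_\kk'$: here one uses the more subtle observation that in the expansion $e_{H_0}(w \cdot m) = \sum_s \mu_s (ws) \cdot m$ the reflections $s$ satisfy $s(H_0) = w^{-1}(H_0)$, whence $ws(H_0) = H_0$, i.e. $ws \in N_0$, so $(ws) m \in N_0 \cdot M_\kk' = M_\kk'$ by $N_0$-stability. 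Thus a proper nonzero $M_\kk'$ would produce a proper nonzero $V'$, contradicting the irreducibility of $V$.

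Finally, Lemma \ref{lem:descent} descends $M_\kk$ to an irreducible $\bkappa N_0$-module $M$ on which $W_0$ still acts trivially and satisfying $M_\kk \cong M \otimes_{\bkappa} \kk$; then $\tilde{M} \cong \tilde{M}_\kk \cong V$, which is the desired conclusion.
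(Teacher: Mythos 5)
Your proposal is correct and follows essentially the same route as the paper: identify the $N_0$-module on which $W_0$ acts trivially as the image of $e_{H_0}$ (the paper uses the $\delta$-eigenspace of $e_{H_0}$, which is the same subspace), use relation $(3_1)$ to show that $e_{H_0}$ sends every $W$-translate back into it, and deduce from this both the induced-module structure and the irreducibility of that $N_0$-module. The only difference is packaging: you argue via a surjection from the induced module together with Schur's lemma, whereas the paper builds the decomposition $\bigoplus_H V_H$ directly inside the given irreducible module; the descent to $\bkappa$ via Lemma \ref{lem:descent} appears in both.
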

\begin{proof}

Let $Q$ be an irreducible representation of $Br_1(W)$ not factoring through $\kk W$. This means that there exists some $H_0 \in \mathcal{A}$ for which $e_{H_0}$ acts nontrivially. Denote $e_0 = e_{H_0}$, and $\mathcal{A}_0$ the orbit of $H_0$ under $W$.
Since $e_0^2 = \delta e_0$ and $e_0$ acts non trivially, there exists $x_0 \in Q$ such that $e_0.x_0 = \delta x_0$. Let $W_0$ denote the subgroup fixing $H_0$ and $N_0$ its normalizer. For $s \in W_0$ we have $s.x_0 = (1/\delta) s e_0. x_0
 = (1/\delta) e_0.x_0 = (\delta/\delta) x_0 = x_0$ hence $W_0$ acts trivially on $x_0$. 
Now we set $V = \kk N_0. x_0 \subset Q$.
Note that $V$ is a $\kk N_0$-module factoring through $\kk N_0/W_0$. Moreover,
for all $w \in N_0$, we have $e_0w.x_0 = w e_0 x_0 = \delta w .x_0$ hence $e_0$ acts as $\delta. \mathrm{Id}$ on $V$.

Let $H \in \mathcal{A}_0$, and $w \in W$ such that
$H = w(H_0)$. Then $w.V$ depends only on $H$. Indeed, if $w_1^{-1}w_2 \in N_0$ we have
$w_1^{-1}w_2 V = V$ hence $w_2 V = w_1 V$. We set $V_H = w.V$. 
For $g \in W$ we have $g.V_H = gw.V = V_{H'}$
for $H' = gw(H)$.
Clearly $\sum_H V_H$ is a $\kk W$-submodule of $Q$.
Then, for $x \in V_H$ and $H = w(H_0)$, we have $e_H.x = w e_0 w^{-1}. x =w e_0.( w^{-1}. x) =
\delta w .( w^{-1}. x) = \delta. x$.
But this implies that, either $e_0.x = (1/\delta) e_0 e_H.x = 0$ or
$$
e_0.x = (1/\delta) e_0 e_H.x = 
(1/\delta)\sum_{u\in \mathcal{R}; u(H)=H_0} \mu_u u e_H.x= 
\sum_{u\in \mathcal{R}; u(H)=H_0} \mu_u u .x
$$
hence $e_0$ maps $x$ inside $V$.
In particular $\sum V_H$ is stable under $e_0$. 
Finally, if $H' \in \mathcal{A}\setminus \mathcal{A}_0$, we have $e_{H'}.x = (1/\delta) e_{H'} e_0.x = 0$ for all $x \in V$. This implies similarly that $e_{H'}.wx = w.e_{w^{-1}(H')}.x = 0$
for all $x \in V$, hence $e_{H'}$ acts by $0$
on $\sum_{H} V_H$. This implies that $\sum_H V_H$
is stable under $Br_1(W)$, hence $\sum_H V_H = Q$.

Let now assume that $V$ is not irreducible as a $\kk N_0$-module, and contains a proper irreducible submodule $U$. Then one proves similarly that $\sum_H U_H$ is stable under $Br_1(W)$, where $U_H = w.U$ for $w(H_0)=H$,
hence $\sum_H U_H = \sum_H V_H = Q$. Finally, remark that, if $x \in \sum_H V_H$ satisfies $e_0.x = \delta.x$, then $x\in V$, since we know
that $e_0.(\sum_H V_H) \subset  V$. Similarly, we check that $e_0$ maps $\sum_H U_H$ to $U$, hence
$U$ and $V$ can be both identified with the nullspace of $e_0 - \delta$ on the same space, hence $U = V$, which proves the irreducibility of $V$ as a $\kk N_0$-module.

Finally, for $H = w(H_0)$ we have
that $e_H = w e_0 w^{-1}$ maps $Q$ to $V_H = w.V$
and acts by $\delta$ on $V_H$. It follows that
$V_H$ is the nullspace of $e_H - \delta$ and this proves that $Q = \bigoplus_H V_H$. This proves that, as a $\kk W$-module, $Q$ is the induced representation of $V$. But, by Lemma \ref{lem:descent}, for any $\bkappa(\delta) N_0$-module $V$ there exists a
$\bkappa N_0$-module $V_0$ such that 
$V \simeq V_0 \otimes_{\bkappa} \kk$, hence $Q \simeq \tilde{V}_0$, and this proves the claim.

\end{proof}

\begin{remark} The determination of the representations of $Br_1(W)$ for type $H_3$ was essentially done in \cite{CHENBR}.
\end{remark}

\section{Proof of Theorem \ref{thm:structBr1}}

From Proposition \ref{prop:spannBr} we know that
$Br_1(W)$ admits a spanning set formed by the $w \in W$
and the $w e_H, H \in \mathcal{H}$, where $w \in W/W_H$
and $W_H$ is the parabolic subgroup fixing $H$. Therefore
the dimension of $Br_1(W)$ is at most $|W|+\sum_{\mathcal{A}_0 \in \mathcal{A}/W} |\mathcal{A}_0|\times |W|/|W_{H_0}|$ where $W_{H_0}$ is the pointwise stabilizer of a representative $H_0 \in \mathcal{A}_0$.

On the other hand, from the previous section we know
that $Br_1(W)$ has irreducible representations of dimension $|\mathcal{A}_0|\times \dim \theta$ where $\theta$ is an irreducible representation of $N_W(W_{H_0})/
W_{H_0}$. Therefore its dimension is at least
$$
\sum_{\mathcal{A}_0 \in \mathcal{A}/W} |\mathcal{A}_0|^2\times\sum_{\theta \in \Irr
N_W(W_{H_0})/
W_{H_0}} (\dim \theta)^2
=\sum_{\mathcal{A}_0 \in \mathcal{A}/W} |\mathcal{A}_0|^2 \frac{|N_W(W_{H_0})|}{
|W_{H_0}|}$$
But since $W/N_W(W_{H_0})$ is in bijection with $|\mathcal{A}_0|$, this is equal to the previous quantity. Therefore we get the formula for the dimension given by the theorem, and semisimplicity as well.

\section{Examples}

\subsection{Example : $G(4,2,2)$}
\label{sect:G422}
We consider the group $W = G(4,2,2)$ of rank 2, made of monomial matrices with entries in $\mu_4 = \{1,-1,\ii,-\ii\}$ and whose product of the nonzero entries belongs
to $\mu_2 = \{ -1,1 \}$. It admits the presentation
$$
 W = \langle s,t,u \ \mid \ stu=tus=ust ,\ s^2=t^2 = u^2 = 1 \rangle 
$$
and therefore an automorphism of order 3 mapping $s \mapsto t \mapsto  u \mapsto s$. It has order $16$, and
6 reflections, $s,t,u,s',t',u'$, forming 3 conjugacy classes of two elements, $\{s,s'\},\{t,t'\},\{u,u'\}$. The reflection $s'$ is
equal to $tst = t\underline{stu}u = t\underline{tus}u = usu$, hence $t'=utu=sts$, $u'= sus = tut$.

The normaliser $N_0$ of $W_0 = \langle s \rangle$ is abelian of order $8$, isomorphic to $\Z/2 \times \Z/4$.
The element $z = stu$ generates the center of $W$, has order $4$, and its image generates $N_0/W_0$.

Therefore $N_0/W_0$ is naturally identified with $Z(W)$.

The irreducible representations of $N_0$ satisfying the
property that $W_0$ acts trivially are therefore uniquely
determined by a choice of $\zeta \in \mu_4$ and given
by the formula $R_{\zeta} : z \mapsto \zeta$.
We now consider the induced module $V_s(\zeta) = \mathrm{Ind}_{N_0}^W R_{\zeta}$. It admits for basis
$v_s, v_{s'} = t.v_s$, where $N_0$ acts on $\C v_s$
by $R_{\zeta}$. 
From this we get $u.v_s = ts.stu.v_s = \zeta ts.v_s = \zeta t.v_s = \zeta v_{s'}$, hence $u.v_{s'} = u^{-1}.v_{s'}=\zeta^{-1}.v_s$, and similarly
$s.v_s = v_s$, $s.v_{s'} = u.ust.v_s = \zeta.u.v_s = \zeta^2.v_s$, and $t.v_s = v_{s'}$, $t.v_{s'}=v_s$. Therefore we get the matrices
$$
s \mapsto \begin{pmatrix}
1 & 0 \\ 0 & \zeta^2
\end{pmatrix}
t \mapsto \begin{pmatrix}
0 & 1 \\ 1 & 0
\end{pmatrix}
u \mapsto \begin{pmatrix}
0 & \zeta^{-1} \\ \zeta & 0
\end{pmatrix}
s' \mapsto \begin{pmatrix}
\zeta^2 & 0 \\ 0 & 1
\end{pmatrix}
t' \mapsto \begin{pmatrix}
0 & \zeta^2 \\ \zeta^2 & 0
\end{pmatrix}
u' \mapsto \begin{pmatrix}
0 & \zeta \\ \zeta^{-1} & 0
\end{pmatrix}
$$ 
Notice that, for $\zeta \in \{-\ii,\ii\}$, we get reflection representations for $W$. We have $s,s' \in N_0$, while $xsx = s'$ for $x \in
\{ t,u,t',u' \}$.
Letting $s_0 = s$, we get $p_s.v_s = \delta v_s$
and 
$$
p_s.v_{s'} = \sum_{xsx = s'} x.v_{s'}
= (t+u+t'+u').v_{s'} = (1+\zeta+\zeta^2+\zeta^3).v_{s'}
$$
Since $s'=tst$ we get
$$
p_s = \begin{pmatrix}
\delta & 0 \\ 0 & 1+\zeta+\zeta^2+\zeta^3
\end{pmatrix}, \ \ \ 
p_{s'} = \begin{pmatrix}
1+\zeta+\zeta^2+\zeta^3 & 0 \\ 0 & \delta
\end{pmatrix}
$$
and $p_x = 0$ for $x \in \{t,t',u,u'\}$. Note that
$1+\zeta+\zeta^2+\zeta^3 = 0$ unless $\zeta = 1$.

Condition (1)'' is always fulfilled on such representations when $p_H= 0$, so we consider only the
case where $H$ is the reflecting hyperplane of $s$ or $s'$.
But in this case the reflection under consideration has
to be the other one, and therefore we need to check
whether $s p_{s'}=p_{s'} s= p_{s'}$ and 
$s' p_{s}=p_{s} s'= p_s$. This is the case if and only if $\zeta^2 = 1$, that is $\zeta\in \{-1,1\}$.

In order that condition (1)' is fulfilled, we need first of all that (1)'' is fulfilled, so we consider only the case $\zeta^2 = 1$. Let $w \in W$ satisfying $\, ^ws = s$,
that is $w \in N_0$. Since $Z(W)$ is a complement to
$W_0$ inside $W$, and since it is generated by $z$, the
condition is then whether $z p_s = p_s z = p_s$,
which would imply $z p_{s'} = p_{s'} z = p_{s'}$ after
conjugation by $t$. But $z p_s = p_s z = \zeta p_s$,
hence the condition is fulfilled
only if $\zeta = 1$.

These two facts prove that conditions (1)' and (1)'' are
not equivalent, and that they are genuine additional conditions.

The other representations $V_{t}(\zeta)$,
$V_{u}(\zeta)$ are deduced from $V_s(\zeta)$
by applying $\varphi$, as this is readily extended
to an automorphism of order $3$ of $Br(M)$.

\subsection{Example : $G(e,e,3)$}
\label{subsect:Gee3}

For $W = G(e,e,3)$, there is a single conjugacy class of
reflections. Let us choose $s_0= \begin{pmatrix}
0 & 1 & 0 \\ 1 & 0 & 0 \\ 0& 0& 1
\end{pmatrix}$. Then $N_0$ is easily determined to be
equal the image of $\langle \mu_e, (1,2) \rangle < \GL_2(\C)$ under the map $M \mapsto \diag(M,q(M)^{-1})$
where $q(M)$ is the product of the entries of $M$. Therefore $N_0/W_0$ is isomorphic to the group of all complex $e$-th roots of $1$. Now, $W$ has order $e^2\times 3! = 6e^2$, $N_0$ has order $2e$, $\mathcal{R}$ has cardinality $3e$, hence $Br(M) = Br_1(M)$ has dimension $6e^2+9e^3 = 3e^2(2+3e)$.

\end{document}